\documentclass[a4paper,10pt,reqno]{amsart}

\usepackage[english]{babel}
\usepackage[utf8]{inputenc}
\usepackage{amsmath,amssymb,amsfonts,amsthm,amscd}
\usepackage[mathscr]{eucal}
\usepackage{enumitem}
\usepackage{tikz-cd}

\newcommand{\R}{\mathbb{R}}
\newcommand{\C}{\mathbb{C}}

\newcommand{\df}{\mathrm{d}}
\newcommand{\Nil}{\mathrm{Nil}_3}
\newcommand{\Iso}{\mathop{\rm Iso}\nolimits}

\newtheorem{theorem}{Theorem}[section]
\newtheorem{proposition}[theorem]{Proposition}
\newtheorem{corollary}[theorem]{Corollary}
\newtheorem{lemma}[theorem]{Lemma}

\theoremstyle{definition}

\theoremstyle{remark}
\newtheorem{remark}[theorem]{Remark}

\numberwithin{equation}{section}

\setlength{\parindent}{0.7em}
\setlength{\parskip}{0.4em}
\linespread{1.1}

\title[Dual quadratic differentials and entire minimal graphs]{Dual quadratic differentials and entire minimal graphs in Heisenberg space}
\date{}

\author{Jos\'{e} M. Manzano}
\address{Department of Mathematics\\
King's College London \\ Strand WC2R 2LS -- London (United Kingdom)}
\email{manzanoprego@gmail.com}

\thanks{This research has been partially supported by Spanish MEC-Feder Research Project MTM2014-52368-P, and by the EPSRC Grant No. EP/M024512/1. The author is grateful to Hojoo Lee because, when he heard about this manuscript, he sent to the author an unpublished preprint from 2009, in which he also proves Lemma~\ref{lema:hessian}, and other results closely related to Proposition~\ref{prop:bernstein}.}

\subjclass[2010]{Primary 53A10; Secondary 53C30}

\keywords{Minimal surfaces, constant mean curvature, homogeneous 3-manifolds, Heisenberg group, Lorentz--Minkowski space, quadratic differentials, Gauss curvature}

\selectlanguage{english}

\begin{document}

\begin{abstract}
We define holomorphic quadratic differentials for spacelike surfaces with constant mean curvature in the Lorentzian homogeneous spaces $\mathbb{L}(\kappa,\tau)$ with isometry group of dimension 4, which are dual to the Abresch--Rosenberg differentials in the Riemannian counterparts $\mathbb{E}(\kappa,\tau)$, and obtain some consequences. On the one hand, we give a very short proof of the Bernstein problem in Heisenberg space, and provide a geometric description of the family of entire graphs sharing the same differential in terms of a 2-parameter conformal deformation. On the other hand, we prove that entire minimal graphs in Heisenberg space have negative Gauss curvature.
\end{abstract}

\maketitle

\section{Introduction}

The theory of constant mean curvature surfaces in homogeneous 3-manifolds has been a rather active research field during the last decades, and specially since Abresch and Rosenberg~\cite{AR,AR2} defined holomorphic quadratic differentials on constant mean curvature $H$ surfaces ($H$-surfaces in the sequel) on simply connected homogeneous Riemannian 3-manifolds $\mathbb{E}(\kappa,\tau)$ with isometry group of dimension 4, extending the classical Hopf differentials in space forms. Their target was the Hopf problem, and the existence of the differential revealed that $H$-spheres in $\mathbb{E}(\kappa,\tau)$ exist if and only if $4H^2+\kappa\leq 0$, in which case they are rotationally invariant. The value of $H$, if any, such that $4H^2+\kappa=0$ is usually called the \emph{critical} mean curvature. The Hopf problem has been solved very recently in the rest of homogeneous 3-manifolds by Meeks, Mira, P\'{e}rez, and Ros~\cite{MMPR}.

Given $\kappa,\tau\in\R$, the space $\mathbb{E}(\kappa,\tau)$ is characterized by admitting a Killing submersion with unitary Killing vector field and constant bundle curvature $\tau$ over the complete simply connected surface $\mathbb{M}^2(\kappa)$ with constant curvature $\kappa$, see~\cite{Dan,Man}. They include the product spaces $\mathbb{H}^2(\kappa)\times\R$ and $\mathbb{S}^2(\kappa)\times\R$, the Heisenberg space $\Nil(\tau)$, and $\mathrm{SU}(2)$ and $\widetilde{\mathrm{Sl}}_2(\R)$ endowed with special left-invariant metrics. There are homogeneous Lorentzian counterparts $\mathbb{L}(\kappa,\tau)$ enjoying the same description, but such that the fibers of the submersion are timelike, see~\cite{Lee,LeeMan}. It is important to notice that some space forms live also among them, namely the Euclidean space $\R^3=\mathbb{E}(0,0)$, the round spheres $\mathbb{S}^3(\kappa)=\mathbb{E}(\kappa,\frac{1}{2}\sqrt{\kappa})$, the Lorentz-Minkowski space $\mathbb{L}^3=\mathbb{L}(0,0)$, and the anti-de Sitter space $\mathbb{H}_1^3(\kappa)=\mathbb{L}(\kappa,\frac{1}{2}\sqrt{-\kappa})$.

Other milestones in the theory of $H$-surfaces in $\mathbb{E}(\kappa,\tau)$-spaces were the discovery by Daniel~\cite{Dan} of an isometric Lawson type correspondence, and the solution to the Bernstein problem in Heisenberg space by Fern\'{a}ndez and Mira~\cite{FM}. They showed that, for each holomorphic quadratic differential $Q$ on a non-compact simply connected Riemann surface $\Sigma$ (with $Q\neq 0$ if $\Sigma$ is parabolic), there is a 2-parameter family of entire minimal graphs (i.e., global minimal sections of the Killing submersion) in $\Nil(\frac{1}{2})=\mathbb{E}(0,\frac{1}{2})$ with Abresch--Rosenberg differential $Q$. Their solution was based on Wan and Au's result that the Hopf differential establishes a bijection between entire spacelike $\frac{1}{2}$-graphs in the Lorentz-Minkowski space $\mathbb{L}^3$ and holomorphic quadratic differentials~\cite{Wan,WanAu}, up to ambient isometries or conformal reparametrizations. In the way from $\frac{1}{2}$-graphs in $\mathbb{L}^3$ to minimal graphs in $\Nil(\frac{1}{2})$, Fern\'{a}ndez and Mira used the Daniel correspondence and the hyperbolic Gauss map for $\frac{1}{2}$-surfaces in $\mathbb{H}^2\times\R$ (see~\cite{FM3}), among other related techniques (we refer to the lecture notes by Daniel, Hauswirth and Mira~\cite{DHM}, as well as to the \textsc{icm} proceedings~\cite{FM2} for further reading). Dorfmeister, Inoguchi and Kobayashi~\cite{DIK} have obtained another proof of Bernstein problem by means of loop-group techniques, which also uses Wan and Au's result.

More recently, Lee~\cite{Lee} found a duality between $H$-graphs in $\mathbb{E}(\kappa,\tau)$ and spacelike $\tau$-graphs in $\mathbb{L}(\kappa,H)$ over simply connected subdomains of $\mathbb{M}^2(\kappa)$, which generalizes the classical Calabi duality~\cite{Calabi} between minimal surfaces in $\R^3$ and maximal surfaces in $\mathbb{L}^3$, and easily extends to spacelike conformal immersions with constant mean curvature which are not necessarily graphs, see~\cite{Man17}. The duality gives a reason why surfaces with critical mean curvature enjoy special properties: they are precisely those whose dual surfaces lie in constant sectional curvature spaces, where the geometry is richer. We obtain new applications of the duality connecting the conformal theories of $H$-surfaces in the Riemannian and Lorentzian settings. 

On the one hand, we define a holomorphic quadratic differential for spacelike $\tau$-surfaces in $\mathbb{L}(\kappa,H)$, dual to the Abresch--Rosenberg differential (see Theorem~\ref{thm:differential}). In particular, the Abresch--Rosenberg differential of minimal graphs in $\Nil(\tau)$ and the Hopf differential of $\tau$-graphs in $\mathbb{L}^3$ are shown to be dual. We will follow the approach in~\cite{DHM} and elude the use of conformal parameters, obtaining explicit expressions of the differential only depending on the functions defining the graphs. Once this is achieved, we get a short solution to the Bernstein problem in $\Nil(\frac{1}{2})$ by plugging directly the duality in Wan and Au's results, and avoiding the use of harmonic maps. Observe that quadratic differentials (and hence the corresponding harmonic maps) of $\frac{1}{2}$-graphs in $\mathbb{H}^2\times\R$, minimal graphs in $\Nil(\frac{1}{2})$, spacelike $\frac{1}{2}$-graphs in $\mathbb{L}^3$ and maximal graphs in $\mathbb{H}^3_1(-1)$, are essentially the same object in the underlying Riemann surface, so our approach turns out to be \emph{a posteriori} a really short shortcut in Fern\'{a}ndez and Mira's solution.

On the other hand, our technique implies that two entire minimal graphs in $\Nil(\tau)$ have the same Abresch--Rosenberg differential if and only their dual $\tau$-graphs are congruent in $\mathbb{L}^3$. Since the isometry group of $\Nil(\tau)$ is 4-dimensional whereas that of $\mathbb{L}^3$ is 6-dimensional, there is room for a 2-parameter family of non-congruent entire minimal graphs with the same differential. Consider the 2-dimensional subgroup $G$ of the stabilizer of the origin of $\mathbb{L}^3$ leaving invariant the hyperbolic plane $\mathbb{H}^2=\{(x,y,z)\in\mathbb{L}^3:z=(1+x^2+y^2)^{1/2}\}$ and fixing a point of the ideal boundary $\partial_\infty\mathbb{H}^2$, which consists of hyperbolic and parabolic rotations of $\mathbb{H}^2$, see Remark~\ref{rmk:G}. Then, the action of $G$ on an entire minimal graph $\Sigma\subset\Nil(\tau)$ (via duality) gives all isometry classes of surfaces with the same Abresch--Rosenberg differential as $\Sigma$ (see Theorem~\ref{thm:orbit}). 

The 2-parameter family of graphs sharing the same differential degenerates (it becomes 1-dimensional or 0-dimensional) if and only if the dual surface is invariant under parabolic or hyperbolic screw motions. In order to illustrate this, we will check that entire minimal graphs with zero differential are the minimal umbrellas, and parabolic surfaces with differential $c\,\df\zeta^2$, where $\zeta$ is a conformal parameter over $\C$, are precisely the invariant examples found by Figueroa, Mercuri and Pedrosa~\cite{FMP}. Although this was already known, see~\cite{AR2,Dan,FM}, our approach also shows that invariant examples are related by hyperbolic deformations. Some other invariant examples will be discussed throughout the paper, though very few examples of explicit entire minimal graphs are known so far (e.g., the examples constructed by Cartier~\cite{Cartier} or Nelli, Sa Earp and Toubiana~\cite{NST} fail to be explicit).

The last part of the paper will be devoted to apply the duality to prove that entire graphs with critical mean curvature in $\mathbb{E}(\kappa,\tau)$, $\kappa-4\tau^2\neq 0$, have negative Gauss curvature. To this effect, we shall use a curvature estimate for entire $\tau$-graphs in $\mathbb{L}^3$ due to Cheng and Yau~\cite{CY2}, see also~\cite{CT}. The proof of the curvature estimate will also rely on a fancy relation between the Hessian determinants of dual graphs, which follows from comparing the moduli of the dual differentials (see Lemma~\ref{lema:hessian} and Remark~\ref{rmk:q}). It is worth mentioning that the Gauss equation for $H$-surfaces in $\mathbb{E}(\kappa,\tau)$ reads
\begin{equation}\label{eqn:gauss}
K=\det(A)+\tau^2+(\kappa-4\tau^2)\nu^2,
\end{equation}
where $K$, $A$ and $\nu$ denote the Gauss curvature, the shape operator, and the angle function of the surface, respectively~\cite{Dan,ER2}. If $\kappa+4H^2=0$, then Equation~\eqref{eqn:gauss} gives the estimate $K\leq\tau^2+H^2$, which is not sharp for entire graphs. Nonetheless, there do exist complete properly embedded minimal surfaces in $\Nil(\tau)$ whose Gauss curvature changes sign (e.g., some helicoids, see Remark~\ref{rmk:catenoids-helicoids}).

By a simple comparison with the Euclidean plane, we deduce that entire minimal graphs in $\Nil(\tau)$ have at least quadratic area growth, and it is quadratic if and only if the graph has finite total curvature by Hartman and Li's results~\cite{Hartman,Li}. This allows us to rephrase in terms of curvature the conjecture posed by the author, P\'{e}rez and Rodr\'{i}guez in~\cite{MPR} that there are no entire minimal graphs in $\Nil(\tau)$ with quadratic area growth. We also get the necessary condition that an entire graph with intrinsic quadratic area growth in $\Nil(\tau)$ converges to a vertical plane along any diverging sequence, so the Gauss curvature and the angle function of the graph vanish at infinity, see Proposition~\ref{prop:tendToZero}. No monotonicity formula for minimal surfaces in $\Nil(\tau)$ has been found so far, which is one of the key techniques leading to the same result in $\R^3$, and Theorem~\ref{thm:curvature} is the first lower bound for the area growth for entire minimal graphs in $\Nil(\tau)$. A sharp upper bound was found by the author and Nelli~\cite{ManNel}, showing that such area growth is at most cubic.

\section{The conformal duality}

Throughout the paper $\mathbb{E}(\kappa,\tau)$ (resp. $\mathbb{L}(\kappa,\tau)$), with $\kappa,\tau\in\R$, will denote the only simply connected Riemannian (resp. Lorentzian) 3-manifold admitting a Killing submersion $\pi$ (resp. $\widetilde\pi$) with constant bundle curvature $\tau$ over the complete simply connected surface $\mathbb{M}^2(\kappa)$ with constant curvature $\kappa$, see~\cite{Dan,Man}. The duality is a natural correspondence between surfaces that can be stated as follows: 

\begin{theorem}[Conformal duality~\cite{Lee,Man17}]\label{thm:duality}
Let $\Sigma$ be a simply connected Riemann surface, and let $\tau,\kappa,H\in\R$. There is a duality between
\begin{enumerate}[label=(\alph*)]
 \item nowhere vertical conformal $H$-immersions $X:\Sigma\to\mathbb{E}(\kappa,\tau)$;
 \item spacelike conformal $\tau$-immersions $\widetilde X:\Sigma\to\mathbb{L}(\kappa,H)$.
\end{enumerate}
The dual immersions $X$ and $\widetilde X$ are determined up to a vertical translation, and satisfy $\pi\circ X=\widetilde\pi\circ\widetilde X$.
\end{theorem}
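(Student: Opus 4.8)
The plan is to reduce the statement to a correspondence between \emph{Killing graphs} over a common domain of the base $\mathbb{M}^2(\kappa)$, and to recognize that the mean curvature equation and the bundle curvature relation \emph{exchange their roles} under the duality. Since $X$ is nowhere vertical, $\pi\circ X$ is a local diffeomorphism, so locally $X$ is the graph $t=u$ of a fiber function $u$ over an open set of $\mathbb{M}^2(\kappa)$. Writing the $\mathbb{E}(\kappa,\tau)$-metric as $\pi^*g_0+(\df t-\pi^*\alpha)^2$, where $g_0$ is the base metric and $\alpha$ is a connection $1$-form with $\df\alpha=2\tau\,\df A$, I would introduce the horizontal field $\sigma:=\nabla u-\alpha^\sharp$, namely the projection to the base of the gradient of the graph. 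A standard computation then yields the angle function $\nu=(1+|\sigma|^2)^{-1/2}\neq 0$, the induced metric $I=g_0+\sigma^\flat\otimes\sigma^\flat$, and the quasilinear $H$-graph equation $\operatorname{div}\bigl(\sigma/\sqrt{1+|\sigma|^2}\bigr)=2H$.

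The heart of the argument is an algebraic substitution. Let $J$ denote the $\pi/2$-rotation (the complex structure) of the base, and set $\widetilde\sigma:=J\bigl(\sigma/\sqrt{1+|\sigma|^2}\bigr)$. Then $|\widetilde\sigma|^2=|\sigma|^2/(1+|\sigma|^2)<1$, so any graph whose horizontal field is $\widetilde\sigma$ is automatically \emph{spacelike}, and moreover $\widetilde\sigma/\sqrt{1-|\widetilde\sigma|^2}=J\sigma$. Writing the $\mathbb{L}(\kappa,H)$-metric as $\pi^*g_0-(\df t-\pi^*\widetilde\alpha)^2$, whose bundle curvature $H$ gives $\df\widetilde\alpha=-2H\,\df A$ in the orientation making the forthcoming cancellation occur, I would define the dual graph by prescribing its horizontal field to be $\widetilde\sigma$, i.e. by solving $\nabla\widetilde u=\widetilde\alpha^\sharp+\widetilde\sigma$. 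Using the planar identities $\operatorname{curl}\circ J=\operatorname{div}$ and $\operatorname{div}\circ J=-\operatorname{curl}$, which are conformally natural in dimension two, the obstruction to integrability is $\df(\widetilde\alpha+\widetilde\sigma^\flat)=\df\widetilde\alpha+\operatorname{div}\bigl(\sigma/\sqrt{1+|\sigma|^2}\bigr)\df A=(-2H+2H)\df A$, which vanishes \emph{precisely because $u$ satisfies the $H$-graph equation}. As $\Sigma$ is simply connected, the closed $1$-form is exact, so $\widetilde u$ exists and is unique up to an additive constant, which accounts for the vertical translation ambiguity.

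It then remains to verify that $\widetilde X=(\pi\circ X,\widetilde u)$ has the advertised properties. Its mean curvature satisfies $\operatorname{div}\bigl(\widetilde\sigma/\sqrt{1-|\widetilde\sigma|^2}\bigr)=\operatorname{div}(J\sigma)=-\operatorname{curl}(\sigma)=\operatorname{curl}(\alpha^\sharp)=2\tau$, so the dual mean curvature equals $\tau$; note that this computation uses the \emph{bundle curvature} relation $\df\alpha=2\tau\,\df A$ of the original space, confirming that $H$ and $\tau$ trade places. Conformality comes for free: the dual induced metric is $\widetilde I=g_0-\widetilde\sigma^\flat\otimes\widetilde\sigma^\flat=(1+|\sigma|^2)^{-1}I=\nu^2 I$, so $\widetilde X$ is conformal for the same Riemann surface structure as $X$, with conformal factor the square of the angle function. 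Finally, $\pi\circ X=\widetilde\pi\circ\widetilde X$ holds by construction, and since $J^2=-\operatorname{id}$ inverts the substitution while the norms transform back as $1-|\widetilde\sigma|^2=(1+|\sigma|^2)^{-1}$, applying the same procedure with the roles of $H$ and $\tau$, and of the Riemannian and Lorentzian fibers, interchanged recovers $X$ up to vertical translation; this involutivity is what upgrades the construction to a genuine duality.

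The step I expect to be most delicate is the bookkeeping of signs and conventions: one must fix the orientations of the fibers, the signs of the connection forms $\alpha,\widetilde\alpha$ relative to $\tau$ and $H$, and the normalization of $J$ so that the two cancellations above occur simultaneously and with the claimed values. A secondary point is the passage from the local graph description to the global immersion: I would phrase $\sigma$, $\widetilde\sigma$ and the $1$-form $\widetilde\alpha+\widetilde\sigma^\flat$ intrinsically on $\Sigma$, through the horizontal part of $\df X$ and the angle function, so that even when $\pi\circ X$ fails to be injective the object to be integrated is a globally defined closed $1$-form on the simply connected surface $\Sigma$.
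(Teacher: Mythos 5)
Your proposal is correct and is essentially the proof of the cited sources \cite{Lee,Man17} that the paper imports: your substitution $\widetilde\sigma=J\bigl(\sigma/\sqrt{1+|\sigma|^2}\bigr)$ is exactly the invariant form of the twin relations \eqref{eqn:twin}, your integrability argument (the $H$-graph equation makes $\widetilde\alpha+\widetilde\sigma^\flat$ closed, while $\df\alpha=2\tau\,\df A$ yields the dual mean curvature $\tau$) is the standard one, and your conformal factor $\nu^2$ matches \eqref{eqn:1ff}--\eqref{eqn:conformal-factor}. Up to the sign and orientation bookkeeping you already flag (the paper's inverse twin relation carries $-J$, consistent with your $J^2=-\mathrm{id}$ remark), there is nothing to add.
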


We will introduce coordinates in order to build up an appropriate framework to develop the duality (a coordinate free approach was given in~\cite{Man17}). Let us define \[\lambda(x,y)=\left(1+\tfrac{\kappa}{4}(x^2+y^2)\right)^{-1}\]
over the simply connected domain $\Omega_\kappa=\left\{(x,y)\in\R^2:1+\tfrac{\kappa}{4}(x^2+y^2)>0\right\}$. Then $(\Omega_\kappa,\lambda^2(\df x^2+\df y^2))$ has constant curvature $\kappa$, and the spaces $\mathbb{E}(\kappa,\tau)$ and $\mathbb{L}(\kappa,H)$ can be locally modeled as the manifold $\Omega_\kappa\times\R$ endowed with the metrics
\begin{equation}\label{eqn:ds}
\begin{aligned}
 \df s^2_{\mathbb{E}(\kappa,\tau)}&=\lambda^2(\df x^2+\df y^2)+(\df z^2+\tau\lambda(y\df x-x\df y))^2,\\
 \df s^2_{\mathbb{L}(\kappa,H)}&=\lambda^2(\df x^2+\df y^2)-(\df z^2-H\lambda(y\df x-x\df y))^2,
\end{aligned}
\end{equation}
where $\lambda$ stands for the conformal factor in the base surface $\mathbb{M}^2(\kappa)$, given by
\[\lambda(x,y,z)=\left(1+\tfrac{\kappa}{4}(x^2+y^2)\right)^{-1}.\]
If $\kappa\leq 0$, then~\eqref{eqn:ds} provides a global model for $\mathbb{E}(\kappa,\tau)$ and $\mathbb{L}(\kappa,H)$, whereas a whole fiber is omitted if $\kappa>0$ (in this case $\mathbb{E}(\kappa,\tau)$ and $\mathbb{L}(\kappa,H)$ are topologically $\mathbb{S}^2\times\R$ or $\mathbb{S}^3$). Since these coordinates will be used for local computation, this will not become actually an issue.

We can define global orthonormal frames $\{E_1,E_2,E_3\}$ and $\{\widetilde E_1,\widetilde E_2,\widetilde E_3\}$ for $\df s^2_{\mathbb{E}(\kappa,\tau)}$ and $\df s^2_{\mathbb{L}(\kappa,H)}$, where $\widetilde E_3$ is timelike, as
\begin{equation}\label{eqn:frame}
\begin{aligned}
 E_1&=\tfrac{1}{\lambda}\partial_x-\tau y\partial_z,& E_2&=\tfrac{1}{\lambda}\partial_y+\tau x\partial_z,&E_3&=\partial_z,\\
 \widetilde E_1&=\tfrac{1}{\lambda}\partial_x+H y\partial_z,&\widetilde E_2&=\tfrac{1}{\lambda}\partial_y-H x\partial_z& \widetilde E_3&=\partial_z.
\end{aligned}\end{equation}
The Killing submersion in this model is nothing but $(x,y,z)\mapsto(x,y)$, and the vertical Killing vector field is $\partial_z$. The Levi-Civita connection $\overline\nabla$ of $\mathbb{E}(\kappa,\tau)$ can be evaluated at the frame $\{E_1,E_2,E_3\}$ by means of Koszul formula, giving rise to 
\begin{equation}\label{eqn:levi-civita:R}
\begin{aligned}
\overline\nabla_{E_1}E_1&=\tfrac{\kappa}{2}yE_2,&
\overline\nabla_{E_1}E_2&=-\tfrac{\kappa}{2}yE_1+\tau E_3,&
\overline\nabla_{E_1}E_3&=-\tau E_2,\\
\overline\nabla_{E_2}E_1&=-\tfrac{\kappa}{2}xE_2-\tau E_3,&
\overline\nabla_{E_2}E_2&=\tfrac{\kappa}{2}xE_1,&
\overline\nabla_{E_2}E_3&=\tau E_1,\\
\overline\nabla_{E_3}E_1&=-\tau E_2,&
\overline\nabla_{E_3}E_2&=\tau E_1,&
\overline\nabla_{E_3}E_3&=0,
\end{aligned}
\end{equation}
The Levi-Civita connection $\widetilde\nabla$ of $\mathbb{L}(\kappa,H)$, admits a representation very similar to~\eqref{eqn:levi-civita:R}, since it also satisfies the same Koszul formula (see~\cite{Oneill}):
\begin{equation}\label{eqn:levi-civita:L}
\begin{aligned}
\widetilde\nabla_{\widetilde E_1}\widetilde E_1&=\tfrac{\kappa}{2}y\widetilde E_2,&
\widetilde\nabla_{\widetilde E_1}\widetilde E_2&=-\tfrac{\kappa}{2}y\widetilde E_1-H \widetilde E_3,&
\widetilde\nabla_{\widetilde E_1}\widetilde E_3&=-H \widetilde E_2,\\
\widetilde\nabla_{\widetilde E_2}\widetilde E_1&=-\tfrac{\kappa}{2}x\widetilde E_2+H \widetilde E_3,&
\widetilde\nabla_{\widetilde E_2}\widetilde E_2&=\tfrac{\kappa}{2}x\widetilde E_1,&
\widetilde\nabla_{\widetilde E_2}\widetilde E_3&=H \widetilde E_1,\\
\widetilde\nabla_{\widetilde E_3}\widetilde E_1&=-H \widetilde E_2,&
\widetilde\nabla_{\widetilde E_3}\widetilde E_2&=H \widetilde E_1,&
\widetilde\nabla_{\widetilde E_3}\widetilde E_3&=0.
\end{aligned}
\end{equation}

\subsection{The duality in coordinates}\label{sec:coordinates}
For any $u\in C^\infty(\Omega)$ over some open domain $\Omega\subset\Omega_\kappa$, we can define the vertical graph of $u$ over the section $(x,y)\mapsto(x,y,0)$, as the surface $\Sigma_u\subset\Omega_\kappa\times\R$ given by
\[\Sigma_u=\{(x,y,u(x,y)):(x,y)\in\Omega\}.\]
Observe that the duality given by Theorem~\ref{thm:duality} restricts to a duality between graphs over simply connected domains. If $\Sigma_u\subset\mathbb{E}(\kappa,\tau)$ and $\Sigma_v\subset\mathbb{L}(\kappa,H)$ are dual graphs over $\Omega\subseteq\mathbb{M}^2(\kappa)$ for some $u,v\in C^\infty(\Omega)$, let us define the quantities 
\begin{equation}
\begin{aligned} 
\alpha&=\tfrac{1}{\lambda}u_x+\tau y,&\beta&=\tfrac{1}{\lambda}u_y-\tau x,&\omega&=(1+\alpha^2+\beta^2)^{1/2},\\
\widetilde\alpha&=\tfrac{1}{\lambda}v_x-H y,&\widetilde\beta&=\tfrac{1}{\lambda}v_y+H x,&\widetilde\omega&=(1-\widetilde\alpha^2-\widetilde\beta^2)^{1/2}.
\end{aligned}
\end{equation} 
Then $\Sigma_u$ and $\Sigma_v$ are dual graphs if and only if they satisfy the so-called \emph{twin relations}~\cite{Lee}, namely the equivalent identities
\begin{equation}\label{eqn:twin}
 \begin{aligned}
  (\widetilde\alpha,\widetilde\beta)&=\Big(\frac{\beta}{\omega},\frac{-\alpha}{\omega}\Big)\quad\Longleftrightarrow\quad
  (\alpha,\beta)&=\Big(\frac{-\widetilde\beta}{\widetilde\omega},\frac{\widetilde\alpha}{\widetilde\omega}\Big),
 \end{aligned}
\end{equation}
in which case we also have $\widetilde\omega=\frac{1}{\omega}$.

Consider the global frames $\{e_1,e_2\}$ in $\Sigma_u$ and $\{\widetilde e_1,\widetilde e_2\}$ in $\Sigma_v$ defined as
\begin{equation}\label{eqn:e1e2}
\begin{aligned}
 e_1&=E_1+\alpha E_3,& \widetilde e_1&=\widetilde E_1+\widetilde\alpha\widetilde E_3,\\
 e_2&=E_2+\beta E_3,& \widetilde e_2&=\widetilde E_2+\widetilde\beta\widetilde E_3.
\end{aligned}\end{equation}
Therefore, the first fundamental forms of $\Sigma_u$ and $\Sigma_v$ in these frames take the form
 \begin{equation}\label{eqn:1ff}
 \begin{aligned}
  \mathrm{I}&\equiv\left(\begin{matrix}1+\alpha^2&\alpha\beta\\\alpha\beta&1+\beta^2\end{matrix}\right),&
  \widetilde{\mathrm{I}}&\equiv\left(\begin{matrix}1-\widetilde\alpha^2&-\widetilde\alpha\widetilde\beta\\-\widetilde\alpha\widetilde\beta&1-\widetilde\beta^2\end{matrix}\right),
  \end{aligned}\end{equation}
so the twin relations~\eqref{eqn:twin} imply that the two matrices in~\eqref{eqn:1ff} are proportional with $\widetilde{\mathrm{I}}=\omega^{-2}\mathrm{I}=\widetilde\omega^2\mathrm{I}$. This means that the global diffeomorphism
\begin{equation}\label{eqn:phi}
\Phi:\Sigma_u\to\Sigma_v,\qquad \Phi(x,y,u(x,y))=(x,y,v(x,y)).
\end{equation}
is conformal. In terms of conformal immersions $X:\Sigma\to\Sigma_u$ and $\widetilde X:\Sigma\to\Sigma_v$, the condition $\pi\circ X=\widetilde\pi\circ\widetilde X$ yields $\Phi=\widetilde X\circ X^{-1}$, so the metrics induced by dual immersions are related by the following identity:
\begin{equation}\label{eqn:conformal-factor}
X^*\df s^2_{\mathbb{E}(\kappa,\tau)}=\omega^2\widetilde X^*\df s^2_{\mathbb{L}(\kappa,H)}.
\end{equation}

From~\eqref{eqn:e1e2}, we deduce that the vector fields
 \begin{equation}\label{eqn:N}
 \begin{aligned}
   N&=-\frac{\alpha}{\omega}E_1-\frac\beta\omega E_2+\frac1\omega E_3,&\widetilde N&=\frac{\widetilde\alpha}{\widetilde\omega}\widetilde E_1+\frac{\widetilde\beta}{\widetilde\omega}\widetilde E_2+\frac{1}{\widetilde\omega} \widetilde E_3.
  \end{aligned}\end{equation}
are the upward-pointing unit normal vector fields to $\Sigma_u$ and $\Sigma_v$, respectively. Hence~\eqref{eqn:twin} implies that the so-called angle functions $\nu=\langle N,E_3\rangle=\omega^{-1}$ and $\widetilde\nu=\langle \widetilde N,\widetilde E_3\rangle=\widetilde\omega^{-1}$ are reciprocal. Moreover, $N$ (resp. $\widetilde N$) allows us to define a $\frac{\pi}{2}$-rotation in the tangent bundle as $JX=N\times X$ (resp. $JX=\widetilde N\times X$). Here the usual orientation in the underlying manifold $\R^3$ will be assumed, which is crucial in the definition of the wedge product $\times$, namely $\langle N\times X,Y\rangle=\det(N,X,Y)$ for all vector fields $X,Y$ in both Riemannian and Lorentzian cases, see~\cite{Lopez}.

In particular, from~\eqref{eqn:e1e2} and~\eqref{eqn:N} we deduce the following identities, which will be helpful in Section~\ref{sec:differential}:
 \begin{equation}\label{eqn:Je1Je2}
 \begin{aligned} 
   Je_1&=-\frac{\alpha\beta}{\omega}e_1+\frac{1+\alpha^2}{\omega}e_2,&J\widetilde e_1&=\frac{\widetilde\alpha\widetilde\beta}{\widetilde\omega}\widetilde e_1+\frac{1-\widetilde\alpha^2}{\widetilde\omega}\widetilde e_2,\\
   Je_2&=-\frac{1+\beta^2}{\omega}e_1+\frac{\alpha\beta}{\omega}e_2,&J\widetilde e_2&=-\frac{1-\widetilde\beta^2}{\widetilde\omega}\widetilde e_1-\frac{\widetilde\alpha\widetilde\beta}{\widetilde\omega}\widetilde e_2.
  \end{aligned}\end{equation}

The second fundamental forms of $\Sigma_u$ and $\Sigma_v$ are defined as $\sigma(X,Y)=\langle\overline\nabla_XY,N\rangle$ and $\widetilde\sigma(X,Y)=\langle\overline\nabla_XY,\widetilde N\rangle$, respectively. Using~\eqref{eqn:e1e2},~\eqref{eqn:1ff},~\eqref{eqn:N}, and the Levi-Civita connection~\eqref{eqn:levi-civita:R}, we reach the following identities for $\sigma$:
\begin{equation}\label{eqn:2ff:R}
\begin{aligned}
\sigma(e_1,e_1)&=\frac{1}{\omega}\left(\frac{u_{xx}}{\lambda^2}+2\tau\alpha\beta+\frac{\kappa}{2}(x\alpha-y\beta)-\frac{\kappa\tau}{2}xy\right),\\
\sigma(e_1,e_2)&=\frac{1}{\omega}\left(\frac{u_{xy}}{\lambda^2}+\tau(\beta^2-\alpha^2)+\frac{\kappa}{2}(x\beta+y\alpha)+\frac{\kappa\tau}{4}(x^2-y^2)\right),\\
\sigma(e_2,e_2)&=\frac{1}{\omega}\left(\frac{u_{yy}}{\lambda^2}-2\tau\alpha\beta-\frac{\kappa}{2}(x\alpha-y\beta)+\frac{\kappa\tau}{2}xy\right).
\end{aligned}
\end{equation}
Analogously, we get the corresponding expressions for $\widetilde\sigma$ by means of~\eqref{eqn:levi-civita:L}:
\begin{equation}\label{eqn:2ff:L}
\begin{aligned}
\widetilde \sigma(\widetilde e_1,\widetilde e_1)&=\frac{-1}{\widetilde \omega}\left(\frac{v_{xx}}{\lambda^2}+2H\widetilde \alpha\widetilde \beta+\frac{\kappa}{2}(x\widetilde \alpha-y\widetilde \beta)+\frac{\kappa H}{2}xy\right),\\
\widetilde \sigma(\widetilde e_1,\widetilde e_2)&=\frac{-1}{\widetilde \omega}\left(\frac{v_{xy}}{\lambda^2}+H(\widetilde \beta^2-\widetilde \alpha^2)+\frac{\kappa}{2}(x\widetilde \beta+y\widetilde \alpha)-\frac{\kappa H}{4}(x^2-y^2)\right),\\
\widetilde \sigma(\widetilde e_2,\widetilde e_2)&=\frac{-1}{\widetilde \omega}\left(\frac{v_{yy}}{\lambda^2}-2H\widetilde \alpha\widetilde \beta-\frac{\kappa}{2}(x\widetilde \alpha-y\widetilde \beta)-\frac{\kappa H}{2}xy\right).
\end{aligned}
\end{equation}

\subsection{Holomorphic quadratic differentials}\label{sec:differential}
Given arbitrary $\kappa,\tau,H\in\R$, let us consider constants
$a,\widetilde a,b,\widetilde b\in\C$ satisfying the following linear identities
\begin{equation}\label{eqn:ab}
\begin{aligned}
(\kappa-4\tau^2)a+2(H+i\tau)b&=0,\\
(\kappa+4H^2)\widetilde a+2i(H+i\tau)\widetilde b&=0,
\end{aligned}
\end{equation}
We define a quadratic differential $Q_{a,b}$ for immersed $H$-surfaces in $\mathbb{E}(\kappa,\tau)$, and a quadratic differential $\widetilde Q_{\widetilde a,\widetilde b}$ for immersed spacelike $\tau$-surfaces in $\mathbb{L}(\kappa,H)$ as
\begin{equation}\label{eqn:Q}
\begin{aligned}
Q_{a,b}(X,Y)=a\,\sigma(X-iJX,Y-iJY)+b\langle X-iJX,E_3\rangle\langle Y-iJY,E_3\rangle,\\
\widetilde Q_{\widetilde a,\widetilde b}(X,Y)=\widetilde a\,\widetilde\sigma(X-iJX,Y-iJY)+\widetilde b\langle X-iJX,\widetilde E_3\rangle\langle Y-iJY,\widetilde E_3\rangle,
\end{aligned}
\end{equation}
i.e., $Q_{a,b}$ and $\widetilde Q_{\widetilde a,\widetilde b}$ are the $(2,0)$-components of linear combinations of the complexified second fundamental forms (denoted by $\sigma$ and $\widetilde\sigma$, respectively), and the height differentials. Recall that $J$ is the $\frac\pi2$-rotation defined in the previous section.

The differential $Q_{a,b}$ is nothing but the Abresch--Rosenberg differential of an immersed $H$-surface in $\mathbb{E}(\kappa,\tau)$, see~\cite[Theorem~2.2.1]{DHM}, which is holomorphic. The definition of $\widetilde Q_{\widetilde a,\widetilde b}$ is motivated by the duality, as shown by Theorem~\ref{thm:duality} below.

\begin{remark}
Changing the sign of $N$ or $\widetilde N$ in~\eqref{eqn:Q} implies a change of the signs of $J$ and of the second fundamental form, which gives other (non-holomorphic) quadratic differentials. In other words, some compatibility in the orientation is needed, in the sense that $JX=N\times X$ must hold true when we choose the orientation in the ambient space for which the bundle curvature is $\tau$ (note that the cross product $\times$ and the sign of $\tau$ depend upon the choice of orientation,  see also~\cite{Man}).
\end{remark}

\begin{theorem}\label{thm:differential}
Let $\kappa,\tau,H\in\R$, and let $X:\Sigma\to\mathbb{E}(\kappa,\tau)$ and $\widetilde X:\Sigma\to\mathbb{L}(\kappa,H)$ be dual conformal immersions. Given $a,\widetilde a,b,\widetilde b\in\C$ satisfying~\eqref{eqn:ab}, it follows that 
\[X^*Q_{a,b}=\widetilde X^*\widetilde Q_{\widetilde a,\widetilde b},\]
whenever $\widetilde a=-ia$ and $(\kappa-4\tau^2)\widetilde b+(\kappa+4H^2)b=0$.
\end{theorem}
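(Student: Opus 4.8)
The plan is to split each differential following~\eqref{eqn:Q} as $Q_{a,b}=a\,\Theta+b\,\Theta_3$ and $\widetilde Q_{\widetilde a,\widetilde b}=\widetilde a\,\widetilde\Theta+\widetilde b\,\widetilde\Theta_3$, where $\Theta,\widetilde\Theta$ denote the $(2,0)$-parts of the complexified $\sigma,\widetilde\sigma$ and $\Theta_3,\widetilde\Theta_3$ are the squared height differentials built from $\theta_3(\cdot)=\langle\cdot-iJ\cdot,E_3\rangle$ and $\widetilde\theta_3(\cdot)=\langle\cdot-iJ\cdot,\widetilde E_3\rangle$, and then to match the two pieces separately after reducing to frame components. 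For the reduction, note that by~\eqref{eqn:e1e2} and~\eqref{eqn:frame} one has $\lambda e_1=\partial_x+u_x\partial_z$, $\lambda e_2=\partial_y+u_y\partial_z$ and $\lambda\widetilde e_1=\partial_x+v_x\partial_z$, $\lambda\widetilde e_2=\partial_y+v_y\partial_z$, so these frames are the coordinate tangent fields in the $x$- and $y$-directions. Hence the conformal diffeomorphism $\Phi=\widetilde X\circ X^{-1}$ of~\eqref{eqn:phi} satisfies $\df\Phi(e_i)=\widetilde e_i$, and if $\{f_1,f_2\}$ is the frame on $\Sigma$ with $\df X(f_i)=e_i$ then $\df\widetilde X(f_i)=\widetilde e_i$. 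Therefore the asserted identity $X^*Q_{a,b}=\widetilde X^*\widetilde Q_{\widetilde a,\widetilde b}$ is equivalent to the pointwise identities $Q_{a,b}(e_i,e_j)=\widetilde Q_{\widetilde a,\widetilde b}(\widetilde e_i,\widetilde e_j)$ for $i,j\in\{1,2\}$.

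The height piece is the easy half. From~\eqref{eqn:Je1Je2} together with $\langle e_1,E_3\rangle=\alpha$, $\langle e_2,E_3\rangle=\beta$, $\langle\widetilde e_1,\widetilde E_3\rangle=-\widetilde\alpha$ and $\langle\widetilde e_2,\widetilde E_3\rangle=-\widetilde\beta$ (the minus signs because $\widetilde E_3$ is timelike), I would compute $\theta_3(e_1)=\alpha-i\beta/\omega$, $\theta_3(e_2)=\beta+i\alpha/\omega$, and then use the twin relations~\eqref{eqn:twin} and $\widetilde\omega=\omega^{-1}$ to obtain $\widetilde\theta_3(\widetilde e_i)=-i\,\theta_3(e_i)$. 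Squaring gives $\widetilde\Theta_3=-\Theta_3$ under the frame identification of the first step.

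The heart of the matter is the analogous \emph{constant-free} relation between the second fundamental forms, which I expect to read
\[
\widetilde\Theta=i\,\Theta+2(\tau+iH)\,\Theta_3 .
\]
To prove it I would expand $\sigma(e_i-iJe_i,e_j-iJe_j)$ and $\widetilde\sigma(\widetilde e_i-iJ\widetilde e_i,\widetilde e_j-iJ\widetilde e_j)$ using~\eqref{eqn:Je1Je2}, substitute the explicit expressions~\eqref{eqn:2ff:R} and~\eqref{eqn:2ff:L}, and rewrite $\widetilde\alpha,\widetilde\beta,\widetilde\omega$ and the derivatives of $v$ in terms of $\alpha,\beta,\omega$ and $u$ by means of the twin relations, differentiating~\eqref{eqn:twin} wherever the Hessian of $v$ appears. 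This is the main obstacle: one must check that the second-order terms $v_{xx},v_{xy},v_{yy}$ reorganize precisely into $u_{xx},u_{xy},u_{yy}$, while all the remaining terms linear in $\kappa,\tau,H$ collect exactly into the displayed multiple of $\Theta_3$. Tracking the derivatives of $\lambda$ and $\omega$ produced when differentiating the twin relations is where the bookkeeping is most delicate.

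Finally, combining the identity of the previous step with $\widetilde a=-ia$ and $\widetilde\Theta_3=-\Theta_3$ yields
\[
\widetilde Q_{\widetilde a,\widetilde b}=\widetilde a\,\widetilde\Theta+\widetilde b\,\widetilde\Theta_3=a\,\Theta+\big(2a(H-i\tau)-\widetilde b\big)\,\Theta_3 .
\]
It then suffices to identify the coefficient of $\Theta_3$ with $b$. Solving~\eqref{eqn:ab} together with $\widetilde a=-ia$ gives $b+\widetilde b=2a(H-i\tau)$ (using $H^2+\tau^2=(H+i\tau)(H-i\tau)$ when $H+i\tau\neq 0$), whence $2a(H-i\tau)-\widetilde b=b$ and $\widetilde Q_{\widetilde a,\widetilde b}=a\,\Theta+b\,\Theta_3=Q_{a,b}$; one verifies in passing that the hypothesis $(\kappa-4\tau^2)\widetilde b+(\kappa+4H^2)b=0$ is consistent with---indeed generically implied by---these relations. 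The degenerate choices of parameters for which~\eqref{eqn:ab} no longer determines $b,\widetilde b$ from $a$ are checked directly, the extra hypothesis then supplying the relation between $b$ and $\widetilde b$ needed to conclude via $\widetilde\Theta_3=-\Theta_3$.
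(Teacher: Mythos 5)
Your proposal is correct, and its computational core --- reducing the pullback identity to the frame identities $Q_{a,b}(e_i,e_j)=\widetilde Q_{\widetilde a,\widetilde b}(\widetilde e_i,\widetilde e_j)$ via $\Phi$, then converting the Hessian of $v$ into data of $u$ by differentiating the twin relations --- is the same as the paper's; but your organization is genuinely different. The paper exploits linearity of \eqref{eqn:ab} to fix one admissible quadruple, $a=2(H+i\tau)$, $b=-(\kappa-4\tau^2)$, $\widetilde a=2(\tau-iH)$, $\widetilde b=\kappa+4H^2$, and verifies the frame identities wholesale, simplifying with $\sigma(JX,JY)=2H\langle X,Y\rangle-\sigma(X,Y)$ and its Lorentzian analogue. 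You instead isolate two universal, constant-free tensor identities, $\widetilde\Theta_3=-\Theta_3$ and $\widetilde\Theta=i\,\Theta+2(\tau+iH)\Theta_3$; the latter is equivalent to the theorem for any admissible quadruple with $a\neq 0$, your coefficient bookkeeping is right ($b+\widetilde b=2a(H-i\tau)$, hence $2a(H-i\tau)-\widetilde b=b$), and I checked your $\widetilde\Theta$-identity is consistent, e.g., on the pair $u=-\tau xy$, $v=(\tfrac14\tau^{-2}+x^2)^{1/2}$. What this buys you: a complete proof of the easy height half (the paper never isolates it), a clean statement of exactly what the second-order computation must produce, and an explicit treatment of the degenerate parameters $H=\tau=0$, $\kappa\neq0$, where \eqref{eqn:ab} forces $a=\widetilde a=0$, the extra hypothesis reduces to $\widetilde b=-b$, and the conclusion follows from $\widetilde\Theta_3=-\Theta_3$ alone --- cases the paper relegates to Remark~\ref{rmk:degenerate}. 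The price is the one the paper also pays: the identity $\widetilde\Theta=i\,\Theta+2(\tau+iH)\Theta_3$ is precisely the ``long but straightforward'' expansion that the published proof likewise only sketches, and your plan for it (expand via \eqref{eqn:Je1Je2}, substitute \eqref{eqn:2ff:R}--\eqref{eqn:2ff:L}, differentiate the twin relations) is the correct one. One caution for when you execute it: you read the twin relations with the signs displayed in \eqref{eqn:twin}, which gives $\widetilde\theta_3(\widetilde e_i)=-i\,\theta_3(e_i)$, whereas the paper's own proof (see \eqref{thm:differential:eqn2}--\eqref{thm:differential:eqn3}) and its worked examples use the opposite convention $\alpha=\widetilde\beta/\widetilde\omega$, $\beta=-\widetilde\alpha/\widetilde\omega$ (the only one compatible with the mean curvature equations \eqref{eqn:mean-curvature}), which yields $\widetilde\theta_3(\widetilde e_i)=+i\,\theta_3(e_i)$; since only squares enter, $\widetilde\Theta_3=-\Theta_3$ and your final linear algebra survive either convention, but the sign bookkeeping in the second-order expansion will only close under the latter.
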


\begin{remark}[special cases]\label{rmk:degenerate}
Note that $\kappa-4\tau^2$ and $\kappa+4H^2$ cannot be zero simultaneously unless $\kappa=\tau=H=0$, so the relation between $(a,b)$ and $(\widetilde a,\widetilde b)$ is well defined. In the \emph{degenerate} case $\kappa=\tau=H=0$, the conditions in~\eqref{eqn:ab} become trivial, but we get the original Calabi duality between minimal graphs in $\R^3$ and maximal graphs in $\mathbb{L}^3$. It is well known~\cite{AL} that the classical Hopf differentials in $\R^3$ and $\mathbb{L}^3$ agree via the duality. This will be discussed in Remark~\ref{rmk:differential:nil} below.

If $\kappa+4H^2=0$, then $X$ has critical constant mean curvature, and $\mathbb{L}(\kappa,\tau)$ becomes a Lorentzian space form, and the Abresch--Rosenberg differential $Q_{a,b}$ corresponds to the classical Hopf differential $Q_{-ia,0}$ in $\mathbb{L}^3$ or $\mathbb{H}^3_1(\kappa)$. Likewise, if $\kappa-4\tau^2=0$, then $\mathbb{E}(\kappa,\tau)$ becomes a Riemannian space form and $b=0$, i.e., $Q_{a,b}$ is (a multiple of) the Hopf differential in $\R^3$ or $\mathbb{S}^3(\kappa)$.
\end{remark}

\begin{proof}[Proof of Theorem~\ref{thm:differential}]
Since the result is local, we will consider dual graphs $\Sigma_u\subset\mathbb{E}(\kappa,\tau)$ and $\Sigma_v\subset\mathbb{L}(\kappa,H)$ over some common domain of $\mathbb{M}^2(\kappa)$. Following the notation in Section~\ref{sec:coordinates}, it suffices to check that $Q_{a,b}(e_i,e_j)=\widetilde Q_{\widetilde a,\widetilde b}(\widetilde e_i,\widetilde e_j)$ for all $i,j\in\{1,2\}$. Since~\eqref{eqn:ab} are linear relations, we will choose $a=2(H+i\tau)$, $b=-(\kappa-4\tau^2)$, $\widetilde a=2(\tau-iH)$, and $\widetilde b=\kappa+4H^2$, see Remark~\ref{rmk:degenerate}, and the subindices $(a,b)$ and $(\widetilde a,\widetilde b)$ will be omitted in the sequel.

The computations are somewhat cumbersome, so we will just sketch the proof of $Q(e_1,e_1)=\widetilde Q(\widetilde e_1,\widetilde e_1)$. Since $\sigma(JX,JY)=2H\langle X,Y\rangle-\sigma(X,Y)$ for all vector fields $X$ and $Y$ on $\Sigma_u$, we obtain from Equation~\eqref{eqn:Q} that
\begin{equation}\label{thm:differential:eqn1}
\begin{aligned}
Q(e_1,e_1)&=4(H+i\tau)\big(\sigma(e_1,e_1)-i\sigma(e_1,Je_1)-H\langle e_1,e_1\rangle\big)\\
&\qquad-(\kappa-4\tau^2)\big(\langle e_1,E_3\rangle-i\langle Je_1,E_3\rangle\big)^2.
\end{aligned}
\end{equation}
We can make use of~\eqref{eqn:e1e2},~\eqref{eqn:1ff},~\eqref{eqn:Je1Je2}, and~\eqref{eqn:2ff:R} to evaluate the right-hand-side of~\eqref{thm:differential:eqn1}, getting to an expression of $Q(e_1,e_1)$ in terms of $u_{xx}$, $u_{xy}$, and the first-order symbols $\alpha$ and $\beta$. On the one hand, $\alpha$ and $\beta$ can be written directly in terms of $\widetilde\alpha$ and $\widetilde\beta$ by means of the twin relations~\eqref{eqn:twin}, which give the following explicit first-order \textsc{pde} system relating $u$ and $v$ (see also~\cite{Lee,Man17}):
\begin{align}
 \alpha=\frac{\widetilde\beta}{\widetilde\omega}\quad\Leftrightarrow\quad u_x&=\frac{v_y+Hx\lambda}{\sqrt{1-(\frac{v_x}{\lambda}-H y)^2-(\frac{v_y}{\lambda}+H x)^2}}-\tau y\lambda,\label{thm:differential:eqn2}\\
 \beta=\frac{-\widetilde\alpha}{\widetilde\omega}\quad\Leftrightarrow\quad u_y&=\frac{-v_x+Hy\lambda}{\sqrt{1-(\frac{v_x}{\lambda}-H y)^2-(\frac{v_y}{\lambda}+H x)^2}}+\tau x\lambda.\label{thm:differential:eqn3}
 \end{align}
Therefore $u_{xx}$ (resp. $u_{xy}$) can be worked out by differentiating~\eqref{thm:differential:eqn2} (resp.~\eqref{thm:differential:eqn3}) with respect to $x$, which gives an expression in terms of $v_{xx}$, $v_{xy}$, $\widetilde\alpha$ and $\widetilde\beta$. Plugging such expressions for $u_{xx}$ and $u_{xy}$ in the result of evaluating~\eqref{thm:differential:eqn1}, we get
\begin{equation}\label{thm:differential:eqn4}
Q(e_1,e_1)=\frac{4(H+i\tau)\lambda^2(\widetilde\alpha\widetilde\beta+i\widetilde\omega)}{\widetilde\omega^2}v_{xx}+\frac{4(H+i\tau)\lambda^2(1-\widetilde\alpha^2)}{\widetilde\omega^2}v_{xy}+L,
\end{equation}
where $L$ represents the lower order terms.

The proof will finish if we check that~\eqref{thm:differential:eqn4} coincides with $\widetilde Q(\widetilde e_1,\widetilde e_1)$. Using~\eqref{eqn:Q} and the fact that $\widetilde\sigma(JX,JY)=-2\tau\langle X,Y\rangle-\widetilde\sigma(X,Y)$ for all vector fields $X$ and $Y$ on $\Sigma_v$ (note the change of sign with respect to the Riemannian case due to the definition of mean curvature in the Lorentzian setting, see, e.g.,~\cite{Lopez}), we reach
\begin{equation}\label{thm:differential:eqn5}
\begin{aligned}
\widetilde Q(\widetilde e_1,\widetilde e_1)&=4(\tau-iH)\big(\widetilde \sigma(\widetilde e_1,\widetilde e_1)-i\widetilde \sigma(\widetilde e_1,J\widetilde e_1)+\tau\langle \widetilde e_1,\widetilde e_1\rangle\big)\\
&\qquad+(\kappa+4H^2)\big(\langle \widetilde e_1,\widetilde E_3\rangle-i\langle J\widetilde e_1,\widetilde E_3\rangle\big)^2.
\end{aligned}
\end{equation}
Transforming the right-hand-side of~\eqref{thm:differential:eqn5} by means of~\eqref{eqn:e1e2},~\eqref{eqn:1ff},~\eqref{eqn:Je1Je2}, and~\eqref{eqn:2ff:R}, it is long but straightforward to verify that it is equal to~\eqref{thm:differential:eqn4}.
\end{proof}

\begin{corollary}
Given $\widetilde a,\widetilde b\in\C$ satisfying~\eqref{eqn:ab}, the differential $\widetilde Q_{\widetilde a,\widetilde b}$ is holomorphic on spacelike $\tau$-surfaces immersed in $\mathbb{L}(\kappa,H)$.
\end{corollary}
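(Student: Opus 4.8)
The plan is to deduce the corollary directly from Theorem~\ref{thm:differential} together with the already established holomorphicity of the Abresch--Rosenberg differential $Q_{a,b}$. The crucial observation is that being holomorphic is a local, conformally invariant property of a quadratic differential on the underlying Riemann surface $\Sigma$: in a conformal parameter $\zeta$ one writes $\widetilde X^*\widetilde Q_{\widetilde a,\widetilde b}=q\,\df\zeta^2$ and asks whether $q$ is holomorphic. Since conformally related immersions share the same underlying Riemann surface and the same $(2,0)$-differentials, it suffices to identify $\widetilde X^*\widetilde Q_{\widetilde a,\widetilde b}$ with the pullback of $Q_{a,b}$ under a dual $H$-immersion, and then to verify holomorphicity in a neighbourhood of each point of $\Sigma$.

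First I would reduce to the local graph setting. Given a spacelike $\tau$-surface immersed in $\mathbb{L}(\kappa,H)$, I endow $\Sigma$ with the conformal structure induced by its (positive definite) first fundamental form, so that the immersion becomes a spacelike conformal $\tau$-immersion $\widetilde X:\Sigma\to\mathbb{L}(\kappa,H)$. Around any point, $\widetilde X$ can be written as a spacelike graph $\Sigma_v$ over a simply connected domain of $\mathbb{M}^2(\kappa)$, so the framework of Section~\ref{sec:coordinates} applies. By the conformal duality (Theorem~\ref{thm:duality}), $\widetilde X$ admits locally a dual nowhere vertical conformal $H$-immersion $X:\Sigma\to\mathbb{E}(\kappa,\tau)$, represented by a graph $\Sigma_u$ over the same domain; here $\omega=\widetilde\omega^{-1}$ is finite precisely because $\widetilde X$ is spacelike, which is what guarantees that the dual graph is nowhere vertical.

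Next I would match the constants. Starting from $\widetilde a,\widetilde b\in\C$ satisfying the second relation in~\eqref{eqn:ab}, I set $a=i\widetilde a$ (equivalently $\widetilde a=-ia$) and let $b$ be determined by $(\kappa-4\tau^2)\widetilde b+(\kappa+4H^2)b=0$; by Remark~\ref{rmk:degenerate} this correspondence is well defined and produces a pair $(a,b)$ satisfying the first relation in~\eqref{eqn:ab}. With these choices, Theorem~\ref{thm:differential} yields the pointwise identity $\widetilde X^*\widetilde Q_{\widetilde a,\widetilde b}=X^*Q_{a,b}$ on the common domain. Since $Q_{a,b}$ is the Abresch--Rosenberg differential of the $H$-immersion $X$, which is holomorphic by~\cite[Theorem~2.2.1]{DHM}, the differential $X^*Q_{a,b}$ is holomorphic there, and hence so is $\widetilde X^*\widetilde Q_{\widetilde a,\widetilde b}$. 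As every point of $\Sigma$ has such a neighbourhood, the conclusion follows globally.

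The substantive computation is already carried by Theorem~\ref{thm:differential}, so the only genuine points requiring care are structural rather than computational: one must check that the local dual $H$-graph always exists (which is exactly where spacelikeness, i.e. $\widetilde\omega>0$, is used) and that the correspondence between the admissible pairs $(a,b)$ and $(\widetilde a,\widetilde b)$ remains a bijection even in the space-form cases where one of $\kappa-4\tau^2$ or $\kappa+4H^2$ vanishes. I expect this bookkeeping of the constants, together with confirming that holomorphicity patches across the local duality, to be the main (and only mild) obstacle; the statement itself is an immediate consequence of the duality of the two differentials.
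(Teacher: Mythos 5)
Your proposal is correct and follows exactly the route the paper intends: the paper offers no separate proof, presenting the corollary as an immediate consequence of Theorem~\ref{thm:differential}, since $\widetilde X^*\widetilde Q_{\widetilde a,\widetilde b}=X^*Q_{a,b}$ identifies the dual differential locally with the holomorphic Abresch--Rosenberg differential of the dual $H$-immersion. Your added bookkeeping (local reduction to dual graphs via Theorem~\ref{thm:duality}, and the matching of $(a,b)$ with $(\widetilde a,\widetilde b)$ including the space-form cases where $\kappa+4H^2=0$ forces $\widetilde b=0$) merely makes explicit what the paper leaves implicit, and it checks out.
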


\begin{remark}\label{rmk:differential:nil}
In the case of a minimal graph $\Sigma_u\subset\mathbb{E}(0,\tau)$ (i.e., in the Heisenberg space $\Nil(\tau)$ or in $\R^3$), let $Q=Q_{1,-2i\tau}$. Evaluating $Q$ at $\{e_1=\partial_x+u_x\partial_z,e_2=\partial_y+u_x\partial_z\}$ as in the proof of Theorem~\ref{thm:differential}, we get the simple expressions
\begin{equation}\label{eqn:Qei:R}
\begin{aligned}
Q(e_1,e_1)&=\frac{2u_{xx}}{\omega}+\frac{2i}{\omega^2}\left(\alpha\beta u_{xx}-(1+\alpha^2)u_{xy}\right),\\
Q(e_1,e_2)&=\frac{2u_{xy}}{\omega}+\frac{2i}{\omega^2}\left((1+\beta^2)u_{xx}-\alpha\beta u_{xy}\right),\\
Q(e_2,e_2)&=\frac{2u_{yy}}{\omega}+\frac{2i}{\omega^2}\left((1+\beta^2) u_{xy}-\alpha\beta u_{yy}\right).
\end{aligned}
\end{equation}
The Hopf differential $\widetilde Q=\widetilde Q_{-i,0}$ of the dual $\tau$-graph $\Sigma_v\subset\mathbb{L}^3$ at the dual frame $\{\widetilde e_1=\partial_x+v_x\partial_z,\widetilde e_2=\partial_y+v_y\partial_z\}$ satisfies the formulas
\begin{equation}\label{eqn:Qei:L}
\begin{aligned}
\widetilde Q(\widetilde e_1,\widetilde e_1)&=\frac{2}{\widetilde\omega^2}\left(\widetilde\alpha\widetilde\beta v_{xx}+(1-\widetilde\alpha^2)v_{xy}\right)+\frac{2i}{\widetilde\omega}v_{xx}-2i\tau(1-\widetilde\alpha^2),\\
\widetilde Q(\widetilde e_1,\widetilde e_2)&=\frac{-2}{\widetilde\omega^2}\left((1-\widetilde\beta^2)v_{xx}+\widetilde\alpha\widetilde\beta v_{xy}\right)+\tau\widetilde\omega+\frac{2i}{\widetilde\omega}v_{xy}+2i\tau\widetilde\alpha\widetilde\beta,\\
\widetilde Q(\widetilde e_2,\widetilde e_2)&=\frac{-2}{\widetilde\omega^2}\left((1-\widetilde\beta^2)v_{xx}+\widetilde\alpha\widetilde\beta v_{xy}\right)+\frac{2i}{\widetilde\omega}v_{yy}-2i\tau(1-\widetilde\beta^2).
\end{aligned}
\end{equation}
It follows that $Q_{1,-2\tau i}=\Phi^*\widetilde Q_{-i,0}$, where $\Phi:\Sigma_u\to\Sigma_v$ is the conformal diffeomorphism defined in~\eqref{eqn:phi}. If $\tau=0$, then the classical Hopf differentials $Q_{1,0}$ and $\widetilde Q_{1,0}$ of dual graphs in $\R^3$ and $\mathbb{L}^3$ are related by $Q_{1,0}=\Phi^*\widetilde Q_{-i,0}=-i\Phi^*\widetilde Q_{1,0}$.

The minimal surface equation for $\Sigma_u\subset\Nil(\tau)$ and the mean curvature $\tau$ equation for $\Sigma_v\subset\mathbb{L}^3$, which are the integrability conditions for the twin relations and will be used below, can be written as
\begin{equation}\label{eqn:mean-curvature}
\begin{aligned}
 (1+\beta^2)u_{xx}-2\alpha\beta u_{xy}+(1+\alpha^2)u_{yy}&=0,\\
 (1-\widetilde\beta^2)v_{xx}+2\widetilde\alpha\widetilde\beta v_{xy}+(1-\widetilde\alpha^2)v_{yy}&=2\tau\widetilde\omega^3.
\end{aligned}
\end{equation}
\end{remark}

\subsection{Behavior under isometric deformations} 

Consider the standard Killing submersion structures $\pi:\mathbb{E}(\kappa,\tau)\to\mathbb{M}^2(\kappa)$ and $\widetilde\pi:\mathbb{L}(\kappa,H)\to\mathbb{M}^2(\kappa)$, and denote by $\Iso_\xi^+(\mathbb{E}(\kappa,\tau))$ and $\Iso_\xi^+(\mathbb{L}(\kappa,H))$ the subgroups of $\Iso(\mathbb{E}(\kappa,\tau))$ and $\Iso(\mathbb{L}(\kappa,\tau))$ of direct Killing isometries leaving the unit Killing vector field $\xi$ invariant, see~\cite{Man}.

For each $T\in\Iso_\xi^+(\mathbb{E}(\kappa,\tau))$ and $\widetilde T\in\Iso_\xi^+(\mathbb{L}(\kappa,H))$, there are unique direct isometries $\rho(T),\widetilde\rho(\widetilde T)\in\Iso^+(\mathbb{M}^2(\kappa))$ such that $\rho(T)\circ\pi=\pi\circ T$ and $\widetilde\rho(\widetilde T)\circ\widetilde\pi=\widetilde\pi\circ \widetilde T$. Therefore, $\rho$ and $\widetilde\rho$ define group epimorphisms with kernel the subgroups of \emph{vertical translations} (i.e., the isometries spanned by $\xi$), so there is a group isomorphism 
\begin{equation}\label{eqn:R}
R:\frac{\Iso_\xi^+(\mathbb{E}(\kappa,\tau))}{\ker(\rho)}\longrightarrow\frac{\Iso_\xi^+(\mathbb{L}(\kappa,\tau))}{\ker(\widetilde\rho)}.
\end{equation}
Both the duality and the isomorphism $R$ ignore vertical translations, so the following result makes sense. The proof can be found at~\cite{Man17}.

\begin{proposition}\label{prop:transform}
Let $X:\Sigma\to\mathbb{E}(\kappa,\tau)$ and $\widetilde X:\Sigma\to\mathbb{L}(\kappa,H)$ be dual conformal immersions, and let $T\in\mathrm{Iso}_\xi^+(\mathbb{E}(\kappa,\tau))$. Then $T\circ X$ and $R(T)\circ\widetilde X$ are also dual.
\end{proposition}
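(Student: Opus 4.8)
The plan is to establish Proposition~\ref{prop:transform} by tracking how an isometry acts through the conformal duality at the level of the defining data, namely the first- and second-fundamental-form quantities together with the angle function. Since the duality of Theorem~\ref{thm:duality} characterizes dual immersions by the twin relations~\eqref{eqn:twin} (and the compatibility $\pi\circ X=\widetilde\pi\circ\widetilde X$), the strategy is to show that applying $T\in\Iso_\xi^+(\mathbb{E}(\kappa,\tau))$ to $X$ and the corresponding $R(T)\in\Iso_\xi^+(\mathbb{L}(\kappa,\tau))$ to $\widetilde X$ produces a new pair that again satisfies the twin relations. Because both the duality and the isomorphism $R$ factor through the quotient by vertical translations (see~\eqref{eqn:R}), I will work modulo vertical translations throughout, which is legitimate precisely because Theorem~\ref{thm:duality} only determines the dual pair up to such a translation.

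First I would reduce to the base: the defining property $\rho(T)\circ\pi=\pi\circ T$ and $\widetilde\rho(R(T))\circ\widetilde\pi=\widetilde\pi\circ\widetilde X$ ensures that $T\circ X$ and $R(T)\circ\widetilde X$ still project to the \emph{same} map into $\mathbb{M}^2(\kappa)$, provided $R$ is defined so that $\rho(T)=\widetilde\rho(R(T))$ — which is exactly the content of the isomorphism~\eqref{eqn:R} identifying the two quotient groups via their common image in $\Iso^+(\mathbb{M}^2(\kappa))$. This guarantees the compatibility $\pi\circ(T\circ X)=\widetilde\pi\circ(R(T)\circ\widetilde X)$. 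Next I would verify that $T\circ X$ is still a nowhere-vertical conformal $H$-immersion and $R(T)\circ\widetilde X$ a spacelike conformal $\tau$-immersion: conformality and constancy of mean curvature are preserved because $T$ and $R(T)$ are isometries, and the graph/nowhere-vertical and spacelike conditions are preserved because $T,R(T)\in\Iso_\xi^+$ send vertical directions to vertical directions (they commute with the Killing flow).

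The core computation is to confirm the twin relations survive. Here I would express the relevant quantities — the slope symbols $(\alpha,\beta)$ and $(\widetilde\alpha,\widetilde\beta)$ from Section~\ref{sec:coordinates}, together with $\omega$ and $\widetilde\omega$ — and show that the induced action of $T$ on $(\alpha,\beta)$ matches, under~\eqref{eqn:twin}, the induced action of $R(T)$ on $(\widetilde\alpha,\widetilde\beta)$. The key observation is that $\rho(T)$ acts on the base $\mathbb{M}^2(\kappa)$ as a single orientation-preserving isometry, so it rotates the differentials $du$ and $dv$ by the same differential $d\rho(T)$; the $\frac{\pi}{2}$-rotation $J$ in the twin relations is precisely the conformal rotation on the base, and it commutes with the orientation-preserving isometry $d\rho(T)$. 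Thus the relation $(\widetilde\alpha,\widetilde\beta)=(\beta/\omega,-\alpha/\omega)$ is covariant under simultaneously rotating $(\alpha,\beta)$ and $(\widetilde\alpha,\widetilde\beta)$ by $d\rho(T)$, which is exactly what $T$ and $R(T)$ do. This intertwining is the heart of the matter.

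The main obstacle I anticipate is bookkeeping the orientation and sign conventions, since $\mathbb{L}(\kappa,H)$ is Lorentzian and the Killing data carry a bundle curvature whose sign depends on orientation (as flagged in the Remark after~\eqref{eqn:Q}). In particular, one must check that $R$ sends the bundle curvature $\tau$ of $\mathbb{E}(\kappa,\tau)$ to the correct value for $\mathbb{L}(\kappa,\tau)$ and that the vertical Killing field $\xi$ is genuinely preserved with the right sign, so that $J$ and the normals $N,\widetilde N$ in~\eqref{eqn:N} transform consistently; a sign error here would break the twin relations rather than preserve them. I expect this to be routine once the convention that $R$ is defined through the \emph{common} base isometry is made explicit, and indeed the author defers the detailed verification to~\cite{Man17}, so for the present excerpt it would suffice to assemble the three ingredients above — base compatibility, preservation of the immersion class, and covariance of the twin relations under the common base rotation — into the conclusion that $T\circ X$ and $R(T)\circ\widetilde X$ are dual.
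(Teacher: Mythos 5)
The paper itself contains no proof of Proposition~\ref{prop:transform} --- it simply defers to~\cite{Man17} --- so your proposal can only be judged on its own merits. Its three-part skeleton (base compatibility via $\rho(T)=\widetilde\rho(R(T))$, preservation of the immersion class, covariance of the twin relations) is the natural strategy and matches the twin-relation framework of Section~\ref{sec:coordinates}; modulo the typo $\widetilde\rho(R(T))\circ\widetilde\pi=\widetilde\pi\circ\widetilde X$ (the right-hand side should be $\widetilde\pi\circ R(T)$), the first two parts are fine.

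There is, however, a genuine flaw in your key step as stated: $\rho(T)$ does \emph{not} simply ``rotate the differentials $du$ and $dv$ by $d\rho(T)$''. The twin relations~\eqref{eqn:twin} are written in the gauge-corrected slopes $\alpha=\tfrac{1}{\lambda}u_x+\tau y$, $\beta=\tfrac{1}{\lambda}u_y-\tau x$ (and $\widetilde\alpha,\widetilde\beta$ with the $H$-terms), and the inhomogeneous contributions $\tau\lambda(y\,\df x-x\,\df y)$ are not invariant under a general base isometry; moreover, the lift $T$ of $\rho(T)$ carries a nontrivial vertical component, $T(x,y,z)=\bigl(\rho(T)(x,y),\,z+\varphi(x,y)\bigr)$ with $\varphi$ generally nonconstant, so the transformed graph function is $u'=(u+\varphi)\circ\rho(T)^{-1}$ and $du'$ is \emph{not} the rotation of $du$. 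The correct covariant objects are $(\alpha,\beta)$ themselves: by~\eqref{eqn:e1e2} and~\eqref{eqn:N} they are the vertical components of the adapted tangent frame, equivalently $\omega$ times the horizontal part of $N$. Since $T_*\xi=\xi$ and $T$ is direct, $T_*$ acts on the horizontal distribution, in the global frame $\{E_1,E_2\}$, as a \emph{pointwise} rotation $R_{\theta(p)}$, and $R(T)_*$ acts in $\{\widetilde E_1,\widetilde E_2\}$ by the same angle because both project to $d\rho(T)$; note $\theta$ is point-dependent unless $\kappa=0$ and $\rho(T)$ is linear, so ``commutes with $d\rho(T)$'' must be read pointwise. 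Then $(\alpha',\beta')=R_\theta(\alpha,\beta)$, $(\widetilde\alpha',\widetilde\beta')=R_\theta(\widetilde\alpha,\widetilde\beta)$, $\omega'=\omega$, and since $R_\theta$ commutes with the $\tfrac{\pi}{2}$-rotation $(\alpha,\beta)\mapsto(\beta,-\alpha)$, the twin relations are preserved --- the cancellation of the $\varphi$-terms against the change in $\tau(y\,\df x-x\,\df y)$ is exactly what makes this work. With this correction your argument closes; the orientation worries you flag are resolved precisely because $T$ and $R(T)$ are direct with $T_*\xi=+\xi$, which preserves the upward normals and hence the values of $H$ and $\tau$.
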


\begin{remark}\label{rmk:transform}
Proposition~\ref{prop:transform} generically describes the whole picture concerning Killing isometries~\cite{Man}, since the isometries $T$ such that $T_*\xi=-\xi$, which project to isometries of $\Iso^-(\mathbb{M}^2(\kappa))$, change the sign of the mean curvature (we are considering the upward-pointing normal), and the duality does not apply. 

If $H=0$ or $\tau=0$, then this is not actually an issue, and we can complete the picture by considering the following exceptional cases:
	\begin{enumerate}
		\item If $H=0$, then symmetries about a horizontal geodesics in $\mathbb{E}(\kappa,\tau)$ correspond to mirror symmetries about vertical planes in $\mathbb{L}(\kappa,0)=\mathbb{M}^2(\kappa)\times\R_1$. 
		\item Likewise, if $\tau=0$, mirror symmetries about vertical planes in $\mathbb{E}(\kappa,0)=\mathbb{M}^2(\kappa)\times\R$ correspond to symmetries about horizontal geodesics in $\mathbb{L}(\kappa,H)$. 
		\item If $H=\tau=0$, then we get dual surfaces with zero mean curvature in product spaces, and mirror symmetries about horizontal planes in $\mathbb{E}(\kappa,\tau)$ correspond to mirror symmetries about horizontal planes in $\mathbb{L}(\kappa,\tau)$
	\end{enumerate}
In particular, for minimal surfaces in the Heisenberg group ($\kappa=H=0$), Proposition~\ref{prop:transform} still holds true after extending the isomorphism $R$ to an isomorphism 
\[\overline R:\frac{\Iso(\Nil(\tau))}{\ker(\rho)}\longrightarrow\frac{\Iso_\xi(\mathbb{L}^3)}{\ker(\widetilde\rho)}.\]
\end{remark}
The following consequence of Proposition~\ref{prop:transform} will be helpful when analyzing some examples invariant under a 1-parameter group of isometries in the next section:

\begin{corollary}\label{coro:transform}
$X$ is invariant under a subgroup of $\Iso_\xi^+(\mathbb{E}(\kappa,\tau))$ if and only if $\widetilde X$ is invariant under a subgroup of $\Iso_\xi^+(\mathbb{L}(\kappa,H))$, in which case both subgroups induce the same subgroup of $\Iso(\mathbb{M}^2(\kappa))$ via $\rho$ and $\widetilde\rho$.
\end{corollary}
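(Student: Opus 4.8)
The plan is to deduce the corollary directly from Proposition~\ref{prop:transform}, using the uniqueness of the dual immersion up to vertical translation (Theorem~\ref{thm:duality}) together with the fact that duality is natural under conformal reparametrizations of $\Sigma$. The first ingredient I would record is this naturality: if $X$ and $\widetilde X$ are dual and $\psi$ is a biholomorphism of $\Sigma$, then $X\circ\psi$ and $\widetilde X\circ\psi$ are again dual. This is immediate from the construction in Section~\ref{sec:coordinates}, since the twin relations~\eqref{eqn:twin} and the identity $\pi\circ X=\widetilde\pi\circ\widetilde X$ are preserved under holomorphic reparametrization, while the nowhere-vertical, spacelike, and constant-mean-curvature conditions are all conformally invariant.

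For the forward implication, suppose $X$ is invariant under a subgroup $G\leq\Iso_\xi^+(\mathbb{E}(\kappa,\tau))$. Invariance of the immersed surface means that for each $T\in G$ there is a unique conformal automorphism $\psi_T$ of $\Sigma$ with $T\circ X=X\circ\psi_T$; uniqueness holds because $X$ is an immersion, and a short computation shows that $T\mapsto\psi_T$ is a group homomorphism. Now I would dualize both sides: by Proposition~\ref{prop:transform} the dual of $T\circ X$ is $R(T)\circ\widetilde X$, whereas by the naturality above the dual of $X\circ\psi_T$ is $\widetilde X\circ\psi_T$. Since duals agree up to a vertical translation, there is a vertical translation $V_T$ with $R(T)\circ\widetilde X=V_T\circ\widetilde X\circ\psi_T$. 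Setting $\widetilde T:=V_T^{-1}\circ R(T)\in\Iso_\xi^+(\mathbb{L}(\kappa,H))$, a representative of the coset $R(T)$, we obtain $\widetilde T\circ\widetilde X=\widetilde X\circ\psi_T$, so $\widetilde X$ is invariant under $\widetilde T$.

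It then remains to check that $\widetilde G:=\{\widetilde T:T\in G\}$ is a subgroup. Because $\widetilde X$ is an immersion, the isometry $\widetilde T$ inducing a prescribed reparametrization $\psi_T$ of $\widetilde X(\Sigma)$ is uniquely determined; combined with the homomorphism property of $T\mapsto\psi_T$, this forces $T\mapsto\widetilde T$ to be a homomorphism as well, so $\widetilde G\leq\Iso_\xi^+(\mathbb{L}(\kappa,H))$ and $\widetilde X$ is $\widetilde G$-invariant. The converse follows by the same argument run through the inverse isomorphism $R^{-1}$ in~\eqref{eqn:R}, using that the duality is involutive. Finally, since $\widetilde T$ and $R(T)$ differ by a vertical translation and $R$ is compatible with the projections to the base (so that $\widetilde\rho\circ R=\rho$ on the relevant quotients), we get $\widetilde\rho(\widetilde T)=\rho(T)$ for every $T$; hence $\rho(G)=\widetilde\rho(\widetilde G)$ and the two subgroups induce the same subgroup of $\Iso^+(\mathbb{M}^2(\kappa))$, as claimed.

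The main obstacle I anticipate is the careful bookkeeping with vertical translations: the isomorphism $R$ in~\eqref{eqn:R} lives on the quotient by vertical translations, so passing from the cosets $R(T)$ to honest isometries $\widetilde T$ and verifying that the resulting assignment $T\mapsto\widetilde T$ is a genuine group homomorphism (rather than a mere set bijection) is the delicate point. Everything else is a direct consequence of Proposition~\ref{prop:transform} and the uniqueness clause of Theorem~\ref{thm:duality}.
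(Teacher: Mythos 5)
Your proposal is correct and coincides with the paper's (implicit) argument: the corollary is stated there without proof as an immediate consequence of Proposition~\ref{prop:transform}, and you supply precisely the expected details---naturality of the duality under holomorphic reparametrization, the vertical-translation bookkeeping permitted by Theorem~\ref{thm:duality}, and the compatibility $\widetilde\rho\circ R=\rho$ built into the definition of $R$ in~\eqref{eqn:R}. One cosmetic caveat: uniqueness of $\psi_T$ does not follow from $X$ being an immersion alone (a non-injective immersion can satisfy $X\circ\phi=X$ for a nontrivial conformal $\phi$, e.g.\ when $X$ factors through a covering), but this is harmless here since the full stabilizer of $\widetilde X(\Sigma)$ inside $\Iso_\xi^+(\mathbb{L}(\kappa,H))$ is automatically a subgroup, so the homomorphism property of $T\mapsto\widetilde T$ that you flag as delicate can be bypassed entirely.
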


\section{The Bernstein problem}

As pointed out in the introduction, we will give here a direct proof of Fern\'{a}ndez and Mira's solution to the Bernstein problem in $\Nil(\tau)$, plus a geometric description of the families of surfaces sharing the same Abresch--Rosenberg differential. Essentially, we are interested in parameterizing the moduli space of entire minimal graphs in $\Nil(\tau)$ up to ambient isometries
\begin{equation}\label{eqn:moduli-space}
\frac{\mathcal{E}(\Nil(\tau),0)}{\Iso(\Nil(\tau))},
\end{equation}
where $\mathcal{E}(X,H)$ stands for the space of entire spacelike $H$-graphs in $X\in\{\Nil(\tau),\mathbb{L}^3\}$ up to vertical translations, which is a natural space where the duality applies. 

The 4-dimensional group $\Iso_\xi(\Nil(\tau))$ acts on $\mathcal{E}(\Nil(\tau),0)$ in the usual way, whereas on $\mathcal{E}(\mathbb{L}^3,\tau)$ we have the action of the 6-dimensional group $\Iso_\uparrow(\mathbb{L}^3)$ of isometries preserving the time orientation (which contains the 4-dimensional subgroup $\Iso_\xi(\mathbb{L}^3)$). Cheng and Yau~\cite{CY2} proved that a spacelike surface in $\mathbb{L}^3$ with constant mean curvature $\tau$ is an entire graph if and only if it is complete, and hence each element of $\Iso_\uparrow(\mathbb{L}^3)$ preserves the condition of being an entire graph.

On the one hand, Proposition~\ref{prop:transform} and Remark~\ref{rmk:transform} yield a bijective duality between~\eqref{eqn:moduli-space} and $\mathcal{E}(\mathbb{L}^3,\tau)/\Iso_\xi(\mathbb{L}^3)$. On the other hand, in the conformal level, the Abresch--Rosenberg differential defines a map $\mathcal{Q}:\mathcal{E}(\Nil(\tau),0)/\Iso(\Nil(\tau))\to\mathbf{QD}$, where $\mathbf{QD}$ stands for the set of holomorphic quadratic differentials on the disk $\mathbb{D}=\{z\in\C:|z|\leq 1\}$ (hyperbolic case) or the complex plane $\C$ (parabolic case), up to conformal changes of variables. We will exclude from $\mathbf{QD}$ the zero differential in the parabolic case due to the following non-existence result, see also~\cite{AR2,Dan2}:

\begin{lemma}\label{lema:parabolic-Q0}
There are no entire parabolic minimal graphs in $\Nil(\tau)$ with zero Abresch--Rosenberg differential.
\end{lemma}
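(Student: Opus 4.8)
The plan is to push the problem to $\mathbb{L}^3$ through the conformal duality and then invoke the rigidity of totally umbilical spacelike surfaces. Suppose, for contradiction, that $\Sigma_u\subset\Nil(\tau)$ is an entire parabolic minimal graph whose Abresch--Rosenberg differential $Q=Q_{1,-2i\tau}$ vanishes identically (recall that $\Nil(\tau)$ forces $\tau\neq 0$, since for $\tau=0$ the planes in $\R^3$ would be parabolic graphs with zero differential). First I would apply Theorem~\ref{thm:duality} to obtain the dual surface $\Sigma_v\subset\mathbb{L}^3$, which is an entire spacelike $\tau$-graph over the same domain $\Omega_0=\R^2$; by Cheng and Yau's criterion~\cite{CY2} it is therefore complete, and being a graph over $\R^2$ it is simply connected.

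Next I would use Remark~\ref{rmk:differential:nil}, where the Abresch--Rosenberg differential of $\Sigma_u$ and the Hopf differential of the dual $\tau$-graph are shown to satisfy $Q_{1,-2i\tau}=\Phi^*\widetilde Q_{-i,0}$, with $\Phi$ the conformal diffeomorphism~\eqref{eqn:phi}. Since $\Phi$ is a conformal diffeomorphism, $Q\equiv 0$ forces the Hopf differential of $\Sigma_v$ to vanish, so $\Sigma_v$ is totally umbilical. As $\Sigma_v$ has constant mean curvature $\tau$, its shape operator is $\tau\,\mathrm{Id}$, and the Gauss equation for spacelike surfaces in the flat ambient $\mathbb{L}^3$ (whose unit normal is timelike, giving $K=-\det A$) yields that $\Sigma_v$ has constant Gauss curvature $-\tau^2<0$.

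I would then close by a conformal-type argument. A complete, simply connected Riemannian surface of constant negative curvature is conformally the unit disk, hence hyperbolic and not parabolic. Because $\Phi:\Sigma_u\to\Sigma_v$ is conformal, the two surfaces share the same conformal type, so $\Sigma_u$ is hyperbolic, contradicting the assumption that it is parabolic. This contradiction establishes the non-existence claimed in the lemma.

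The main obstacle is the middle step: one must know that a vanishing Hopf differential characterizes totally umbilical spacelike surfaces, and must pin down both the sign and the value of the Gauss curvature via the Lorentzian Gauss equation (equivalently, one may simply cite the classification of complete totally umbilical spacelike constant-mean-curvature surfaces in $\mathbb{L}^3$ as the hyperbolic planes $\mathbb{H}^2(-\tau^2)$). A minor point to verify is that the dual of an entire graph is again entire, which holds because the duality in coordinates preserves the base domain, here the whole of $\R^2$, so that the Cheng--Yau completeness criterion genuinely applies to $\Sigma_v$.
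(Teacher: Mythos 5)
Your proposal is correct and follows essentially the same route as the paper: dualize via Theorem~\ref{thm:duality}, observe that the vanishing Abresch--Rosenberg differential forces the dual $\tau$-graph in $\mathbb{L}^3$ to have zero Hopf differential and hence be totally umbilical with constant negative Gauss curvature, and conclude that both surfaces are conformally hyperbolic, contradicting parabolicity. The only cosmetic differences are that you derive $K=-\tau^2$ from the Lorentzian Gauss equation (invoking Cheng--Yau completeness) where the paper simply identifies the dual with the hyperboloid $v(x,y)=(\tau^{-2}+x^2+y^2)^{1/2}$, and you helpfully make explicit the minor points ($\tau\neq 0$, entirety of the dual) that the paper leaves implicit.
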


\begin{proof}
By Theorem~\ref{thm:differential}, the dual of a graph with zero Abresch--Rosenberg differential has zero Hopf differential, and hence it is totally umbilical. Therefore it must be congruent to the hyperboloid $\Sigma_v$ with $v(x,y)=(\tau^{-2}+x^2+y^2)^{1/2}$. Since the hyperboloid is a complete surface with constant negative Gauss curvature, it is clearly hyperbolic, and so is the original surface.
\end{proof}

\begin{figure}
\[\begin{tikzcd}
 \mathcal{E}(\Nil(\tau),0)
 	\arrow[rr,rightarrow]&&
 \frac{\mathcal{E}(\Nil(\tau),0)}{\Iso(\Nil(\tau))}
	\arrow[rrd,rightarrow, "\Lambda"]
	\arrow[rr, "\mathcal{Q}"]&&
 \mathbf{QD}
 	\arrow[d,leftrightarrow ,"\text{Wan-Au}"]\\
 \mathcal{E}(\mathbb{L}^3,\tau)
 	\arrow[u,leftrightarrow, "\text{duality}"]
 	\arrow[rr,rightarrow]&&
 \frac{\mathcal{E}(\mathbb{L}^3,\tau)}{\Iso_\xi(\mathbb{L}^3)}
 	\arrow[u,leftrightarrow, "{\begin{matrix}\text{induced}\vspace{-5pt}\\ \text{duality}\end{matrix}}"]\arrow[rr,rightarrow]&&
 \frac{\mathcal{E}(\mathbb{L}^3,\tau)}{\Iso_\uparrow(\mathbb{L}^3)}
\end{tikzcd}\]
\caption{Relations between entire graphs and quadratic differentials.}\label{fig:diagram}
\end{figure}

Wan and Au~\cite{Wan,WanAu} proved that there Hopf differential defines a bijection from $\mathcal{E}(\mathbb{L}^3,\tau)/\Iso_\uparrow(\mathbb{L}^3)$ to $\mathbf{QD}$. Since the Hopf and the Abresch--Rosenberg differentials are dual by Theorem~\ref{thm:differential}, we infer that $\mathcal Q$ induces a surjective map 
\[\Lambda:\frac{\mathcal{E}(\Nil(\tau)),0)}{\Iso(\Nil(\tau))}\longrightarrow\frac{\mathcal{E}(\mathbb{L}^3,\tau)}{\Iso_{\uparrow}(\mathbb{L}^3)}\]
such that the diagram in Figure~\ref{fig:diagram} is commutative. This means that the families of isometry classes of surfaces with the same differential are precisely the level sets of $\Lambda$, so we get the desired solution to the Bernstein problem:

\begin{proposition}[Bernstein problem]\label{prop:bernstein}
For each $Q\in\mathbf{QD}$ there exists an entire minimal graph $\Sigma\subset\Nil(\tau)$ with Abresch--Rosenberg differential $Q$. Another entire minimal graph $\Sigma'\subset\Nil(\tau)$ has the same Abresch--Rosenberg differential as $\Sigma$ if and only if the dual graphs of $\Sigma$ and $\Sigma'$ are congruent in $\mathbb{L}^3$.
\end{proposition}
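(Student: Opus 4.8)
The plan is to read the proposition directly off the commutative diagram of Figure~\ref{fig:diagram}: once every arrow is known to behave as indicated, both assertions become formal consequences, since all of the genuinely analytic content is already packaged into Wan and Au's theorem. What remains is to track how the conformal duality interacts with the property of being \emph{entire}, and to reconcile the two quotient groups appearing in the diagram.

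For the existence statement I would start from a prescribed $Q\in\mathbf{QD}$ and use Wan and Au's bijection~\cite{Wan,WanAu} to produce a complete spacelike $\tau$-surface $\Sigma_v\subset\mathbb{L}^3$ with Hopf differential $Q$, determined up to $\Iso_\uparrow(\mathbb{L}^3)$. By the Cheng--Yau completeness criterion~\cite{CY2} quoted above, $\Sigma_v$ is an entire graph over $\R^2=\mathbb{M}^2(0)$. I would then pass to the dual graph $\Sigma_u\subset\Nil(\tau)$ via Theorem~\ref{thm:duality} in its graph form, integrating the twin relations~\eqref{eqn:twin}, whose integrability is exactly the constant mean curvature equation~\eqref{eqn:mean-curvature}. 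Since dual graphs are defined over the same base domain, $\Sigma_u$ is again entire, and Theorem~\ref{thm:differential} in the normalized form of Remark~\ref{rmk:differential:nil} identifies its Abresch--Rosenberg differential with the Hopf differential of $\Sigma_v$, namely $Q$. This is precisely the surjectivity of $\mathcal{Q}$, equivalently of $\Lambda$.

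The characterization half is then purely diagrammatic. Two entire minimal graphs $\Sigma,\Sigma'$ share their Abresch--Rosenberg differential exactly when $\mathcal{Q}(\Sigma)=\mathcal{Q}(\Sigma')$ in $\mathbf{QD}$. Commutativity of Figure~\ref{fig:diagram} gives $\mathcal{Q}=(\text{Wan--Au})\circ\Lambda$, and since the Wan--Au map is injective this is equivalent to $\Lambda(\Sigma)=\Lambda(\Sigma')$, i.e.\ to the two dual graphs representing the same class in $\mathcal{E}(\mathbb{L}^3,\tau)/\Iso_\uparrow(\mathbb{L}^3)$. That is exactly congruence of the dual graphs in $\mathbb{L}^3$, which is the claim.

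The main obstacle I anticipate is not analytic but organizational: reconciling the two quotients. The induced duality of Proposition~\ref{prop:transform} and Remark~\ref{rmk:transform} is a bijection only onto the finer quotient $\mathcal{E}(\mathbb{L}^3,\tau)/\Iso_\xi(\mathbb{L}^3)$, whereas Wan--Au classifies modulo the larger group $\Iso_\uparrow(\mathbb{L}^3)$; one must verify that the extra $2$-dimensional freedom between $\Iso_\xi$ and $\Iso_\uparrow$ is precisely what collapses the non-congruent graphs sharing a differential, so that $\Lambda$ is well defined and the diagram commutes. Alongside this, one must confirm that \emph{entire} is preserved in both directions of the duality (this is where Cheng--Yau is essential) and invoke Lemma~\ref{lema:parabolic-Q0} to discard the parabolic zero differential, so that the domain of $\mathcal{Q}$ agrees with $\mathbf{QD}$ and no spurious class is lost.
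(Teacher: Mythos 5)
Your proposal is correct and takes essentially the same route as the paper: Proposition~\ref{prop:bernstein} is likewise read off the commutative diagram of Figure~\ref{fig:diagram}, combining the duality of Proposition~\ref{prop:transform} and Remark~\ref{rmk:transform}, the identification of dual differentials in Theorem~\ref{thm:differential}, Cheng--Yau's equivalence of completeness and entirety, Wan--Au's bijection, and Lemma~\ref{lema:parabolic-Q0} to exclude the parabolic zero differential. The organizational point you flag---passing from the quotient by $\Iso_\xi(\mathbb{L}^3)$ to the coarser one by $\Iso_\uparrow(\mathbb{L}^3)$---is resolved in the paper exactly as you describe, with the residual two-parameter freedom deferred to Theorem~\ref{thm:orbit}.
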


Motivated by the last sentence in the above statement, if $S\in\Iso_\uparrow(\mathbb{L}^3)$ and $\Sigma_u\in\mathcal{E}(\Nil(\tau),0)$ with dual graph $\Sigma_v\in\mathcal{E}(\mathbb{L}^3,\tau)$, then $[\Sigma_u],[\Sigma_u]_S\in\frac{\mathcal{E}(\Nil(\tau),0)}{\Iso(\Nil(\tau))}$ will denote the isometry class of $\Sigma_u$ and of a dual of $S(\Sigma_v)$, respectively. Therefore, Proposition~\ref{prop:bernstein} tells us that the set of isometry classes with the same Abresch--Rosenberg differential as $\Sigma_u$ is given by
\begin{equation}\label{eqn:level-sets}
\Lambda^{-1}(\Lambda([\Sigma_u]))=\{[\Sigma_u]_S:S\in\Iso_\uparrow(\mathbb{L}^3)\}.
\end{equation}
Nonetheless, this description is too redundant since $\Iso_\uparrow(\mathbb{L}^3)$ is 6-dimensional, and Proposition~\ref{prop:transform} implies that $[\Sigma_u]_S=[\Sigma_u]$ when $S$ is a Killing isometry. Our goal is to prove that~\eqref{eqn:level-sets} still holds when one substitutes $\Iso_\uparrow(\mathbb{L}^3)$ with the 2-dimensional subgroup $G$ spanned by the following two 1-parameter groups of isometries of $\mathbb{L}^3$:
\begin{itemize}
	\item hyperbolic rotations, given in terms of a parameter $\theta\in\R$ by
		\begin{equation}\label{eqn:hyperbolic-rotation}
    \phi(x,y,z)=(x,y\cosh(\theta)+z\sinh(\theta),y\sinh(\theta)+z\cosh(\theta));
    \end{equation}
	\item parabolic rotations, given in terms of a parameter $a\in\R$ by
		\begin{equation}\label{eqn:parabolic-rotation}
    \phi(x,y,z)=(x-ay+az,ax+(1-\tfrac{a^2}{2})y+\tfrac{a^2}{2}z,ax-\tfrac{a^2}{2}y+(1+\tfrac{a^2}{2})z).
    \end{equation}
\end{itemize}

\begin{remark}\label{rmk:G}
Since $G$ is contained in the stabilizer of the origin of $\mathbb{L}^3$, it can be identified with a subgroup of isometries $\mathbb{H}^2=\{(x,y,z)\in\mathbb{L}^3:z=(1+x^2+y^2)^{1/2}\}$, the hyperbolic plane. Via the isometry between the hyperboloid model $\mathbb{H}^2$ and the Poincar\'e's half-space model $\R^2_+=\{(x,y)\in\R^2:y>0\}$ given by
\begin{align*}
\psi&:\mathbb{H}^2\to\R^2_+,&\psi(x,y,z)&=\left(\frac{2 x
   (1+z)}{x^2+(1-y+z)^2},\frac{(1+z)^2-x^2-y^2}{x^2+(1-y+z)^2}\right),
\end{align*}
the above hyperbolic and parabolic rotations correspond to $(x,y)\mapsto e^\theta(x,y)$ and $(x,y)\mapsto(x+a,y)$, respectively. Hence $G$ is isomorphic to the 2-parameter group of direct isometries of $\mathbb{H}^2$ fixing a point $\infty$ of the ideal boundary $\partial_\infty\mathbb{H}^2$.\end{remark}

\begin{lemma}\label{lemma:orbit1}
For each $S\in\Iso_{\uparrow}(\mathbb{L}^3)$, there exist unique $S_1\in\Iso_\xi(\mathbb{L}^3)$ and $S_2\in G$ such that $S=S_1\circ S_2$.
\end{lemma}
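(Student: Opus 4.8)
The plan is to split $S$ into its linear part and its translation part, reduce the whole statement to a factorization of the linear part, and then recognize that factorization as the simply transitive action of $G$ on the hyperboloid $\mathbb{H}^2$. Write $S(p)=Ap+w$, where $A$ is the linear part of $S$ (lying in the orthochronous Lorentz group $\mathrm{O}^\uparrow(2,1)$, since $S$ preserves the time orientation) and $w\in\R^3$ is a translation vector. Both the hyperbolic and parabolic rotations in~\eqref{eqn:hyperbolic-rotation}--\eqref{eqn:parabolic-rotation} fix the origin, so every element of $G$ is purely linear, say $S_2(p)=Bp$ with $B\in\mathrm{SO}^+(2,1)$. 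On the other hand, $\Iso_\xi(\mathbb{L}^3)$ consists exactly of the isometries $S_1(p)=A_1p+w_1$ whose linear part $A_1$ fixes the vertical Killing field $\xi=\partial_z=e_3$, with $w_1$ an arbitrary translation (constant vertical field is preserved by all translations). Imposing $S=S_1\circ S_2$ then forces $w_1=w$ and $A=A_1B$, so the problem reduces to factoring $A$ uniquely as $A_1B$ with $A_1\in\mathrm{Stab}(e_3)$ and $B\in G$.

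Next I would pass to the action of $\mathrm{O}^\uparrow(2,1)$ on the future unit hyperboloid $\mathbb{H}^2$. Since $e_3\in\mathbb{H}^2$, the subgroup fixing $\xi$ is precisely the point stabilizer $\mathrm{Stab}(e_3)\cong\mathrm{O}(2)$. The key input is that $G$ acts simply transitively on $\mathbb{H}^2$. This follows from Remark~\ref{rmk:G}: via the half-space model $\R^2_+$, the group $G$ acts by the affine maps $(x,y)\mapsto(e^\theta x+a,e^\theta y)$, i.e.\ as the $ax+b$-group, and given any $(x_0,y_0)$ with $y_0>0$ the equations $e^\theta=y_0$ and $a=x_0$ determine $\theta,a$ uniquely. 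Hence there is exactly one element of $G$ carrying the base point $(0,1)\cong e_3$ to any prescribed point of $\mathbb{H}^2$; in particular the only element of $G$ fixing $e_3$ is the identity, so $\mathrm{Stab}(e_3)\cap G=\{\mathrm{id}\}$.

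With simple transitivity in hand, both halves of the lemma are short. For existence, given $A$ I would take $B\in G$ to be the unique element with $B^{-1}e_3=A^{-1}e_3$ (well defined because $A^{-1}e_3\in\mathbb{H}^2$, as $A^{-1}$ is orthochronous). Then $A_1:=AB^{-1}$ satisfies $A_1e_3=A(B^{-1}e_3)=A(A^{-1}e_3)=e_3$, so $A_1\in\mathrm{Stab}(e_3)$ and $A=A_1B$; together with $w_1=w$ this produces the decomposition $S=S_1\circ S_2$. For uniqueness, if $A_1B=A_1'B'$ then $A_1'^{-1}A_1=B'B^{-1}\in\mathrm{Stab}(e_3)\cap G=\{\mathrm{id}\}$, forcing $A_1=A_1'$ and $B=B'$; since the translation part $w_1=w$ is already pinned down, the factorization of $S$ is unique.

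The main obstacle is establishing the simple transitivity of $G$ on $\mathbb{H}^2$, which is essentially the Iwasawa ($KAN$) decomposition of $\mathrm{SO}^+(2,1)$ packaged through the $ax+b$-group description of Remark~\ref{rmk:G}; everything else is bookkeeping. The one point worth checking is that $\mathrm{Stab}(e_3)$ is the \emph{full} $\mathrm{O}(2)$, so that the spatial-orientation-reversing component of $\mathrm{O}^\uparrow(2,1)$ is also reached. This is automatic: $G\subset\mathrm{SO}^+(2,1)$ lies in the identity component, so $A_1=AB^{-1}$ inherits the spatial orientation of $A$, and the argument above never used connectedness of $A$.
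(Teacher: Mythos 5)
Your proof is correct, but it runs the factorization through a genuinely different key fact than the paper does. The paper's proof first composes $S$ with a translation $S_3\in\Iso_\xi(\mathbb{L}^3)$ so that $S_3\circ S$ fixes the origin and hence acts on $\mathbb{H}^2$, then picks $S_4\in\Iso_\xi(\mathbb{L}^3)$ in the stabilizer of the origin carrying $(S_3\circ S)(\infty)$ back to $\infty$, so that $S_2=S_4\circ S_3\circ S$ fixes $\infty$ and lies in $G$, with $S_1=S_3^{-1}\circ S_4^{-1}$; uniqueness is obtained by evaluating at the origin and comparing stabilizer elements that agree at $\infty$. Its key input is thus the transitivity of the vertical rotations on the ideal boundary $\partial_\infty\mathbb{H}^2$. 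You instead strip off the translation part algebraically and use as key input the \emph{simple} transitivity of $G$ on $\mathbb{H}^2$ itself (the $NA$ half of the Iwasawa decomposition, read off from the $ax+b$ description in Remark~\ref{rmk:G}), choosing $B\in G$ so that $A_1=AB^{-1}$ fixes $e_3$; uniqueness then reduces to $G\cap\mathrm{Stab}(e_3)=\{\mathrm{id}\}$, which is immediate from freeness of the action. Both arguments are two readings of the same decomposition $\mathrm{O}^\uparrow(2,1)=\mathrm{Stab}(e_3)\cdot G$, but yours buys a small technical advantage: in the paper's version, $S_4\circ S_3\circ S$ fixes the origin and $\infty$ yet could a priori be an \emph{indirect} isometry of $\mathbb{H}^2$ fixing $\infty$ (hence not in $G$), so one must tacitly allow $S_4$ to be a reflection to absorb the indirect component; your factorization places the full $\mathrm{O}(2)$, including its spatially-reversing component, on the $S_1$ side automatically, which is precisely the point you check in your final paragraph. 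Your linear-part bookkeeping also makes the forced equality of translation parts ($w_1=w$) immediate, where the paper gets the analogous conclusion by evaluating at the origin.
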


\begin{proof}
Let $S_3\in\Iso_\xi(\mathbb{L}^3)$ be the translation mapping $S(0,0,0)$ to $(0,0,0)$, so $S_3\circ S$ lies in the stabilizer of $(0,0,0)$ and induces an isometry of $\mathbb{H}^2$. Hence there exists $S_4\in\Iso_\xi(\mathbb{L}^3)$ such that $S_4((S_3\circ S)(\infty))=\infty$. Then $S_1=S_3^{-1}\circ S_4^{-1}\in\Iso_\xi(\mathbb{L}^3)$ and $S_2=S_4\circ S_3\circ S\in G$ satisfy the conditions in the statement. 

As for uniqueness, let us assume that $S_1,S_1'\in\Iso_\xi(\mathbb{L}^3)$ and $S_2,S_2'\in G$ are such that $S'_1\circ S'_2=S_1\circ S_2$. Evaluating at $(0,0,0)$, we get that $S_1(0,0,0)=S_1'(0,0,0)$, so there is a translation $S_5$ such that $S_5\circ S_1$ and $S_5\circ S_1'$ lie in the stabilizer of the origin and induce rotations of $\mathbb{H}^2$ about $(0,0,1)$ coinciding at $\infty$. We deduce that $S_5\circ S_1=S_5\circ S_1'$, from where $S'_1=S_1$, and hence $S_2'=S_2$.
\end{proof}

This factorization gives the desired parameterization of the level sets of $\Lambda$ by means of $G$.  Although the $[\Sigma_u]_S$ may depend upon the representative $\Sigma_u$ in its isometry class, the family $\{[\Sigma_u]_S:S\in G\}$ does not change (it is not difficult to show that, for each $T\in\Iso(\Nil(\tau))$, there is a bijective map $\sigma:G\to G$ such that $[T(\Sigma_u)]_S=[\Sigma_u]_{\sigma(S)}$ for all $S\in G$ and $\Sigma_u\in\mathcal E(\Nil(\tau),0)$).

\begin{theorem}\label{thm:orbit}
The family of isometry classes of entire minimal graphs in $\Nil(\tau)$ with the same Abresch--Rosenberg differential as $\Sigma_u\in\mathcal{E}(\Nil(\tau),0)$ is given by
\[\Lambda^{-1}(\Lambda([\Sigma_u]))=\{[\Sigma_u]_S:S\in G\},\]
and depends on two real parameters unless the dual graph of $\Sigma_u$ is invariant under some 1-parameter group of hyperbolic or parabolic rotations or screw motions.
\end{theorem}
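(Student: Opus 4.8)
The plan is to first upgrade the redundant description of the level set coming from Proposition~\ref{prop:bernstein} to the statement with $G$, and then to compute the dimension of the resulting family. For the equality, I start from Equation~\eqref{eqn:level-sets}, which gives $\Lambda^{-1}(\Lambda([\Sigma_u]))=\{[\Sigma_u]_S:S\in\Iso_\uparrow(\mathbb{L}^3)\}$. Given $S\in\Iso_\uparrow(\mathbb{L}^3)$, I would use Lemma~\ref{lemma:orbit1} to factor $S=S_1\circ S_2$ with $S_1\in\Iso_\xi(\mathbb{L}^3)$ and $S_2\in G$. Writing $\Sigma_v$ for the dual of $\Sigma_u$, the surface $S_2(\Sigma_v)$ is complete, hence an entire $\tau$-graph by Cheng--Yau, and $S(\Sigma_v)=S_1(S_2(\Sigma_v))$ is obtained from it by the Killing isometry $S_1$. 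By Proposition~\ref{prop:transform} in the extended Heisenberg form of Remark~\ref{rmk:transform}, applying a Killing isometry on the Lorentzian side corresponds to a Killing isometry of $\Nil(\tau)$ on the dual, so the dual of $S(\Sigma_v)$ is $\Iso(\Nil(\tau))$-congruent to the dual of $S_2(\Sigma_v)$; that is, $[\Sigma_u]_S=[\Sigma_u]_{S_2}$. Since $G\subset\Iso_\uparrow(\mathbb{L}^3)$, this proves $\{[\Sigma_u]_S:S\in\Iso_\uparrow(\mathbb{L}^3)\}=\{[\Sigma_u]_S:S\in G\}$, which is the asserted equality.

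For the dimension count, I would study the surjection $F\colon G\to\Lambda^{-1}(\Lambda([\Sigma_u]))$, $S\mapsto[\Sigma_u]_S$. As $\dim G=2$, the family depends on at most two real parameters, and on exactly two unless $F$ has a positive-dimensional fiber. The crucial translation, again via the Heisenberg duality of Remark~\ref{rmk:transform}, is that $[\Sigma_u]_S=[\Sigma_u]_{S'}$ if and only if $S(\Sigma_v)$ and $S'(\Sigma_v)$ are congruent through an element of $\Iso_\xi(\mathbb{L}^3)$. Hence $F$ factors through the double coset space $\Iso_\xi(\mathbb{L}^3)\backslash\Iso_\uparrow(\mathbb{L}^3)/\mathrm{Stab}(\Sigma_v)$, where $\mathrm{Stab}(\Sigma_v)\subset\Iso_\uparrow(\mathbb{L}^3)$ is the group of symmetries of the dual graph. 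Using the identification $G\cong\Iso_\xi(\mathbb{L}^3)\backslash\Iso_\uparrow(\mathbb{L}^3)$ furnished by Lemma~\ref{lemma:orbit1}, the fiber of $F$ through the base point becomes the image of $\mathrm{Stab}(\Sigma_v)$ in $G$ under the projection $S=S_1\circ S_2\mapsto S_2$ (and the fibers over other points have the same dimension, since the corresponding stabilizers are conjugate).

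Therefore $F$ has a positive-dimensional fiber exactly when $\mathrm{Stab}(\Sigma_v)$ has positive-dimensional image in $G$, equivalently when there is a nonconstant one-parameter family $S_t\in G$ with every $S_t(\Sigma_v)$ congruent to $\Sigma_v$ through $\Iso_\xi(\mathbb{L}^3)$. Choosing $T_t\in\Iso_\xi(\mathbb{L}^3)$ with $R_t:=T_t\circ S_t\in\mathrm{Stab}(\Sigma_v)$, this precisely says that $\Sigma_v$ is invariant under a one-parameter group of isometries whose component in $G$ is nontrivial. The remaining point is to classify the connected one-parameter subgroups of $\Iso_\uparrow(\mathbb{L}^3)$ with nontrivial image in $G$: modulo the Killing factor $\Iso_\xi(\mathbb{L}^3)$ these are the hyperbolic rotations~\eqref{eqn:hyperbolic-rotation}, the parabolic rotations~\eqref{eqn:parabolic-rotation}, or the screw motions obtained by composing one of these with a vertical translation or a spatial rotation, matching the exceptional condition in the statement.

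The hardest part will be this last classification step, and in particular passing from the a priori merely continuous family $R_t$ to a genuine one-parameter subgroup to which the classification applies. I would handle this by noting that $\mathrm{Stab}(\Sigma_v)$ is a closed, hence Lie, subgroup of $\Iso_\uparrow(\mathbb{L}^3)$, and by working with its identity component and Lie algebra, checking that any subalgebra transverse to $\mathrm{Lie}(\Iso_\xi(\mathbb{L}^3))$ is generated by infinitesimal hyperbolic or parabolic rotations. A secondary bookkeeping issue is that $[\Sigma_u]_S$ may depend on the chosen representative $\Sigma_u$ of its isometry class, while the family $\{[\Sigma_u]_S:S\in G\}$ does not, as observed before the statement; I would keep this in mind so that the dimension, being a property of the family, is representative-independent.
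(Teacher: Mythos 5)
Your first half is exactly the paper's argument: the paper proves the equality $\Lambda^{-1}(\Lambda([\Sigma_u]))=\{[\Sigma_u]_S:S\in G\}$ precisely by factoring $S=S_1\circ S_2$ via Lemma~\ref{lemma:orbit1}, picking $T_1\in\Iso_\xi(\Nil(\tau))$ with $R(T_1)=S_1$, and applying Proposition~\ref{prop:transform} to conclude $[\Sigma_u]_S=[\Sigma_u]_{S_2}$. For the degeneration, the paper is much terser than you: it records that $[\Sigma_u]_S=[\Sigma_u]_{S'}$ forces $\Sigma_v$ to be carried to itself by an isometry combining an element of $G$ with one of $\Iso_\xi(\mathbb{L}^3)$, asserts that degeneration occurs exactly when $\Sigma_v$ is invariant under a 1-parameter subgroup of $\Iso_\uparrow(\mathbb{L}^3)$ not contained in $\Iso_\xi(\mathbb{L}^3)$, and then simply states that only hyperbolic and parabolic screw motions qualify. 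Your double-coset formulation of the coincidence relation (namely $[\Sigma_u]_S=[\Sigma_u]_{S'}$ iff $S'(\Sigma_v)=S_0(S(\Sigma_v))$ for some $S_0\in\Iso_\xi(\mathbb{L}^3)$) is the correct bookkeeping, and your observation that one must pass from a continuous family $R_t\in\mathrm{Stab}(\Sigma_v)$ to a genuine 1-parameter subgroup via the closed-Lie-subgroup structure of the stabilizer is a real point the paper glosses over.

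However, two of your specific claims fail, and they fail for the same reason: $\Iso_\xi(\mathbb{L}^3)$ is not normal in $\Iso_\uparrow(\mathbb{L}^3)$. First, "the fibers over other points have the same dimension, since the corresponding stabilizers are conjugate" is false as stated: the fiber through $[S]$ is controlled by the image of $\mathrm{Stab}(S(\Sigma_v))=S\,\mathrm{Stab}(\Sigma_v)\,S^{-1}$ in the quotient $\Iso_\xi(\mathbb{L}^3)\backslash\Iso_\uparrow(\mathbb{L}^3)$, and a subgroup contained in $\Iso_\xi(\mathbb{L}^3)$ (e.g., rotations about the vertical axis) conjugates by a boost $S\in G$ to elliptic rotations about a tilted timelike axis, which have nontrivial, positive-dimensional image. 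Second, and consequently, your proposed Lie-algebra classification is false: infinitesimal elliptic rotations about tilted timelike axes are transverse to $\mathrm{Lie}(\Iso_\xi(\mathbb{L}^3))$ yet are neither hyperbolic nor parabolic, so "any subalgebra transverse to $\mathrm{Lie}(\Iso_\xi(\mathbb{L}^3))$ is generated by infinitesimal hyperbolic or parabolic rotations" cannot be proved --- it is simply not true. The elliptic case cannot be excluded by group theory; it requires geometric input about entire graphs: the profile \textsc{ode} for rotationally invariant spacelike $\tau$-graphs shows that smoothness at the axis forces the surface to be the hyperboloid (which is invariant under all of $G$, so it causes no new exceptional case), and an elliptic screw motion with nonzero pitch is impossible because the time-$2\pi$ element is a timelike translation, which no entire spacelike graph can admit (a spacelike graph is $1$-Lipschitz over the base, while invariance under a timelike translation would force a vertical displacement exceeding the horizontal one). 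Also note your classification of hyperbolic screw motions is imprecise: the translational part must be along the spacelike axis fixed by the boost, not a vertical translation or spatial rotation, since otherwise one does not obtain a 1-parameter subgroup. To be fair, the paper's own assertion that the only 1-parameter subgroups not contained in $\Iso_\xi(\mathbb{L}^3)$ are hyperbolic and parabolic screw motions silently skips the tilted elliptic case as well; but your proposal, by committing to a specific (incorrect) Lie-algebra criterion as the mechanism of proof, turns the paper's omission into a step that would genuinely fail, so the classification stage of your plan needs to be replaced by the geometric exclusion arguments above.
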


\begin{proof}
Let $S\in\Iso_\uparrow(\mathbb{L}^3)$ and factorize it as $S=S_1\circ S_2$ with $S_1\in\Iso_\xi(\mathbb{L}^3)$ and $S_2\in G$ by means of Lemma~\ref{lemma:orbit1}. Denote by $\Sigma_v$ the dual of $\Sigma_u$ and consider $T_1\in\Iso_\xi(\Nil(\tau))$ such that $R(T_1)=S_1$, where $R$ is defined by~\eqref{eqn:R}. Then Proposition~\ref{prop:transform} says that the dual of $S(\Sigma_v)$ is the image by $T_1$ of the dual of $S_2(\Sigma_v)$. This proves that $[\Sigma_u]_S=[\Sigma_u]_{S_2}$.

Given distinct $S,S'\in G$, we get that $[\Sigma_u]_S=[\Sigma_u]_{S'}$ if and only if $(S'\circ S^{-1})(\Sigma_v)=S_0(\Sigma_v)$ for some $S_0\in\Iso_\xi(\mathbb{L}^3)$. Hence the 2-parameter family degenerates when $\Sigma_v$ is invariant under a 1-parameter subgroup of $\Iso_\uparrow(\mathbb{L}^3)$ not contained in $\Iso_\xi(\mathbb{L}^3)$. Among the 1-parameter subgroups of $\Iso_\uparrow(\mathbb{L}^3)$, only hyperbolic and parabolic screw-motions (and, in particular, rotations) satisfy this condition.
\end{proof}

\subsection{Invariant $\tau$-surfaces in $\mathbb{L}^3$}
We will briefly discuss here the exceptional cases given by Theorem~\ref{thm:orbit}. As for computing, we will use the twin relations~\eqref{eqn:twin} between a minimal graph $\Sigma_u\subset\Nil(\tau)$ and a spacelike $\tau$-graph $\Sigma_v\subset\mathbb{L}^3$, given by
\begin{equation}\label{eqn:twin:nil}
\begin{aligned}
u_x&=\frac{v_y}{\sqrt{1-|\nabla v|^2}}-\tau y,& v_x=\frac{-u_y+\tau x}{\sqrt{1+|Gu|^2}},\\
u_y&=\frac{-v_x}{\sqrt{1-|\nabla v|^2}}+\tau x,& v_y=\frac{u_x+\tau y}{\sqrt{1+|Gu|^2}},
\end{aligned}
\end{equation}
where $Gu=\alpha\partial_x+\beta\partial_y$.
\begin{itemize}
	\item \textbf{Parabolic screw-motions.} If $\Sigma$ is an entire $\tau$-graph with $\tau\neq 0$ and invariant under parabolic screw motions, then Fern\'{a}ndez and L\'{o}pez~\cite[Lemma~3.1]{FL} proved that it is invariant under parabolic rotations (i.e., the pitch of the screw motion vanishes), so Hano and Nomizu's classification~\cite{HN} ensures that $\Sigma$ is an entire graph parameterized by 
\begin{equation}\label{eqn:parabolic-examples}
\phantom{aa}\qquad(x,w)\mapsto\left(x,\frac{1}{\sqrt{2}}\left(\frac{x^2}{2w}-w\right)+\frac{f(w)}{4\sqrt{2}\,\tau^2},\frac{-1}{\sqrt{2}}\left(\frac{x^2}{2w}+w\right)-\frac{f(w)}{4\sqrt{2}\,\tau^2}\right),
\end{equation} 
and there are essentially three families of entire solutions in terms of a parameter $a>0$, which correspond to the following choices of $f(w)$ in~\eqref{eqn:parabolic-examples}:
\begin{enumerate}
	\item $f(w)=\frac{w}{w^2-a^2}-\frac{1}{2a}\log(\frac{w-a}{w+a})$ for $w>a$,
	\item $f(w)=\frac{w}{w^2+a^2}-\frac{1}{a}\arctan(\frac{w}{a})$ for $w>0$.
	\item $f(w)=\frac{2}{w}$ for $w>0$ (this is the hyperboloid $z=(\tau^{-2}+x^2+y^2)^{1/2}$).
\end{enumerate}
Note that the parameter $b$ in~\cite{HN} is nothing but a translation in the direction of the lightlike vector $(0,1,1)$, which lies in $\Iso_\xi(\mathbb{L}^3)$.

\item \textbf{Hyperbolic screw-motions.} A complete classification seems to be still an open problem in this case. In the particular case of hyperbolic rotations, $\Sigma_v\subset\mathbb{L}^3$ is invariant under~\eqref{eqn:hyperbolic-rotation} if and only if $v(x,y)=\sqrt{f(x)^2+y^2}$ for some $f\in C^\infty(\R)$. If $\Sigma_v$ has constant mean curvature $\tau$, then this includes the totally umbilical hyperboloid ($f(x)=\tau^{-2}+x^2$), the hyperbolic cylinder ($f(x)=\frac{1}{4}\tau^{-2}$), and the semitrough given by the parametrization
\begin{align*}
 (x,y)&\mapsto \tfrac{1}{H}\left(x-\tfrac{1}{2}\coth(x),\tfrac{1}{2}\coth(x)\sinh(y),\tfrac{1}{2}\coth(x)\cosh(y)\right).
\end{align*}
The dual to $\tau$-graphs of the form $v(x,y)=\sqrt{f(x)^2+y^2}$ in $\mathbb{L}^3$ are minimal graphs in $\Nil(\tau)$ of the form $u(x,y)=y g(x)$ for some $g\in C^\infty(\R)$, as noticed by Lee~\cite[Example~4]{Lee}. Some of them have also been studied by Daniel~\cite{Dan2}, and include umbrellas and some invariant surfaces, which will be discussed in Section~\ref{sec:examples}.
\end{itemize}

\subsection{Other examples}\label{sec:examples}

In this section we will briefly discuss the explicit known examples of entire minimal graphs in Heisenberg space. 
\begin{itemize}
 \item \textbf{Umbrellas.} Let $\Sigma_u$ be the entire minimal graph defined by $u(x,y)=0$, which is the union of all geodesics passing through the origin. Its dual surface, which can be computed using~\eqref{eqn:twin:nil}, is the hyperboloid $\Sigma_v$ given by $v(x,y)=(\tau^{-2}+x^2+ y^2)^{1/2}$. Note that $\Sigma_u$ has zero Abresch--Rosenberg differential by~\eqref{eqn:Qei:R}, so $\Sigma_v$ has zero Hopf differential by Theorem~\ref{thm:differential}, which is well known since $\Sigma_v$ is totally umbilical in $\mathbb{L}^3$. Moreover, $\Sigma_u$ and $\Sigma_v$ are conformally hyperbolic, see Lemma~\ref{lema:parabolic-Q0}.

 Moreover $\Sigma_v$ is invariant under the action of $G$, since it is invariant under the 1-parameter groups of hyperbolic and parabolic rotations given by~\eqref{eqn:hyperbolic-rotation} and~\eqref{eqn:parabolic-rotation}. Theorem~\ref{thm:orbit} implies that the only entire minimal graphs with zero Abresch--Rosenberg differential are those congruent to $\Sigma_u$, i.e., the affine planes $u(x,y)=ax+by+c$ for some $a,b,c\in\R$.

 \item \textbf{Invariant surfaces.} Let us consider the entire minimal graph $\Sigma_u$ given by the function $u(x,y)=-\tau xy$ (the minus sign appears just to simplify the computations because of the particular choice of the group $G$). The dual graph is the hyperbolic cylinder $\Sigma_v$ with $v(x,y)=(\frac{1}{4}\tau^{-2}+x^2)^{1/2}$. Then $\Sigma_u$ and $\Sigma_v$ are parabolic with differential $c\,\df\zeta^2$ with respect to a conformal parameter $\zeta$, where $c\in\C$ is a non-zero constant (the value of $c$ is irrelevant since changing it is nothing but a conformal reparametrization), see~\cite{Dan2}. Let us now apply hyperbolic and parabolic rotations.

On the one hand, for each $\theta\in\R$, the hyperbolic rotation~\eqref{eqn:hyperbolic-rotation} produces dual entire graphs $\Sigma_{u_\theta}$ and $\Sigma_{v_\theta}$ with
 	\begin{equation}\label{eqn:invariant}
 	\begin{aligned}
 	\ \qquad\quad\qquad u_\theta(x,y)&=-\tau xy+\frac{\sinh(\theta)}{4\tau}\left(2\tau x\sqrt{1+4\tau^2x^2}+\mathrm{arcsinh}(2\tau x)\right),\\
 	v_\theta(x,y)&=\frac{1}{\cosh(\theta)}\sqrt{\frac{1}{4\tau^2}+x^2}+\tanh(\theta)y.
 	\end{aligned}
 	\end{equation}
 	Observe that the family $\Sigma_{u_\theta}$ is invariant under the 1-parameter group $(x,y,z)\mapsto(x,y+t,z+\tau t x)$ of translations of $\Nil(\tau)$, see also~\cite{FMP}. This corresponds to the invariance of $\Sigma_{v_\theta}$ under the 1-parameter group of translations of $\mathbb{L}^3$ given by $(x,y,z)\mapsto(x,y+t,z-t\tanh(\theta))$, see Corollary~\ref{coro:transform}.

On the other hand, for each $a\in\R$, we can also apply the parabolic rotation~\eqref{eqn:parabolic-rotation} to $\Sigma_v$, in which case we reach the transformed graph
 	\[\qquad\quad\overline v_a(r,s)=\frac{2+4a^2+a^4}{\sqrt{a^4+4}}\sqrt{\frac{1}{4\tau^2}+r^2}-\frac{2a(a^2+2)}{\sqrt{a^4+4}}r+\frac{a^2}{\sqrt{a^4+4}}s\]
 	where we have considered the rotation of parameters
 	\[r=\frac{(a^2-2)x-2ay}{\sqrt{a^4+4}},\qquad s=\frac{2ax+(a^2-2)y}{\sqrt{a^4+4}}.\]
 	Since $\Sigma_{\overline v_a}$ is invariant under the 1-parameter group of translations 
 	\[(r,s,z)\mapsto\left(r,s+t,z-\frac{a^2}{\sqrt{a^2+4}}t\right),\]
 	Corollary~\ref{coro:transform} ensures that the dual surface must be invariant under a 1-parameter group of translations in $\Nil(\tau)$, so it must be congruent to some $\Sigma_{u_{\theta}}$ by the classification of invariant surfaces, see~\cite{FMP}.
\end{itemize} 
\vspace{5pt}
These examples can be summarized in the statement below, which was already proved in~\cite{AR,Dan2,FM} in different ways. We remark that our technique can be applied to other cases that will not be explicitly treated here (e.g., to the semitrough, which is hyperbolic and has Hopf differential $c\,\df\zeta^2$).

\begin{corollary}\label{coro:constant-Q}
Let $\Sigma\subset\Nil(\tau)$ be a parabolic minimal graph with Abresch--Rosenberg differential $Q$.
\begin{enumerate}[label=(\alph*)]
	\item $Q=0$ if and only if $\Sigma$ is an umbrella;
	\item If $\Sigma$ is parabolic, then $Q=c\,\df\zeta^2$ for some $c\in\C$ if and only if $\Sigma$ is invariant under a 1-parameter group of isometries of $\Nil(\tau)$. 
\end{enumerate}
\end{corollary}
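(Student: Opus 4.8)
The plan is to read both equivalences off the umbrella and invariant-surface examples together with Theorem~\ref{thm:orbit}, so that essentially no new computation is required.

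For part (a) I would first dispatch the easy implication: if $\Sigma$ is an umbrella then, up to an ambient isometry, $u\equiv 0$, all second derivatives of $u$ vanish, and the formulas~\eqref{eqn:Qei:R} give $Q=0$ (the same holds for any affine $u=ax+by+c$). For the converse, assume $Q=0$. By Theorem~\ref{thm:differential}, in the form $Q_{1,-2\tau i}=\Phi^*\widetilde Q_{-i,0}$ of Remark~\ref{rmk:differential:nil}, the dual $\tau$-graph $\Sigma_v\subset\mathbb{L}^3$ has vanishing Hopf differential, hence is totally umbilical; being a complete spacelike surface with mean curvature $\tau$ it is congruent to the hyperboloid $v=(\tau^{-2}+x^2+y^2)^{1/2}$. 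Since the hyperboloid is invariant under every hyperbolic and parabolic rotation~\eqref{eqn:hyperbolic-rotation}--\eqref{eqn:parabolic-rotation}, it is fixed by all of $G$, so Theorem~\ref{thm:orbit} collapses the level set $\Lambda^{-1}(\Lambda([\Sigma]))$ to a single isometry class; as the umbrella already realizes that class, $\Sigma$ is an umbrella.

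For part (b) I would establish the chain $Q=c\,\df\zeta^2 \iff$ the level set $\Lambda^{-1}(\Lambda([\Sigma]))$ fails to be $2$-dimensional $\iff \Sigma_v$ is invariant under a $1$-parameter group of hyperbolic or parabolic rotations or screw motions $\iff \Sigma$ is invariant under a $1$-parameter group of isometries of $\Nil(\tau)$. The middle equivalence is Theorem~\ref{thm:orbit} verbatim. For the first, on the parabolic domain $\C$ a holomorphic differential $\phi(\zeta)\,\df\zeta^2$ is constant exactly when it is invariant under the conformal translations $\zeta\mapsto\zeta+t$, since holomorphy turns such translation-invariance into $\phi'\equiv 0$; passing to $\Sigma_v$ by duality and applying Wan--Au, a constant Hopf differential is precisely what lets these conformal translations be realized by a $1$-parameter group of ambient isometries, and conversely such a continuous symmetry pins the differential to a constant. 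The last equivalence I would obtain from Corollary~\ref{coro:transform}, after observing (as in the invariant-surface example, where $v=(\tfrac14\tau^{-2}+x^2)^{1/2}$ is dual to $u=-\tau xy$ and is invariant under $y$-translations) that the continuous symmetry of $\Sigma_v$ can be taken inside $\Iso_\xi(\mathbb{L}^3)$; Lemma~\ref{lema:parabolic-Q0} ensures $c\neq 0$, so nothing is lost in the degenerate case.

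The hard part will be the first equivalence in (b), and more precisely the bookkeeping that matches the symmetry groups across the duality. Theorem~\ref{thm:orbit} expresses degeneration through invariance of $\Sigma_v$ under hyperbolic or parabolic rotations or screw motions, which sit in $\Iso_\uparrow(\mathbb{L}^3)\setminus\Iso_\xi(\mathbb{L}^3)$, whereas Corollary~\ref{coro:transform} only transports $\Iso_\xi$-invariance to $\Nil(\tau)$. The delicate step is thus to show that, for an entire parabolic $\tau$-graph, a single continuous symmetry forces $\Sigma_v$ to be invariant simultaneously under an $\Iso_\xi$-subgroup and under a transverse hyperbolic or parabolic rotation — the double invariance already visible for the hyperbolic cylinder — so that constancy of $Q$, collapse of the $G$-orbit, and $\Nil(\tau)$-invariance of $\Sigma$ genuinely coincide. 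I expect this group-matching, rather than any individual computation, to be the only real subtlety.
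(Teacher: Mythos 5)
Part (a) of your proposal is correct and coincides with the paper's argument: zero Hopf differential, umbilicity of the dual, congruence to the hyperboloid, and $G$-invariance of the hyperboloid collapsing the level set of $\Lambda$ to the umbrella's class. In part (b), however, the statement is not actually proved: the step you yourself isolate as ``the only real subtlety'' --- that an entire parabolic $\tau$-graph $\Sigma_v$ invariant under a $1$-parameter group of hyperbolic or parabolic screw motions (which sit \emph{outside} $\Iso_\xi(\mathbb{L}^3)$) is also invariant under a $1$-parameter subgroup of $\Iso_\xi(\mathbb{L}^3)$, so that Corollary~\ref{coro:transform} can transport the symmetry to $\Nil(\tau)$ --- is left as an expectation, and it is precisely the content of the implication $Q=c\,\df\zeta^2\Rightarrow\Sigma$ invariant. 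Without it, your chain only yields that constancy of $Q$ is equivalent to degeneration of the $G$-orbit, plus the easier converse ($\Sigma$ invariant $\Rightarrow$ dual invariant inside $\Iso_\xi(\mathbb{L}^3)$ $\Rightarrow$ the induced fixed-point-free conformal flow on $\C$ forces $\phi'\equiv 0$; the fixed-point case would give $Q=0$, excluded by Lemma~\ref{lema:parabolic-Q0}). Note also that your intermediate assertion that a constant Hopf differential is ``precisely what lets these conformal translations be realized by a $1$-parameter group of ambient isometries'' is circular as stated: Wan--Au produces a congruence $S_t$ for each fixed $t$, and assembling these into a genuine $1$-parameter group requires a uniqueness/continuity argument that you do not give.

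The paper closes this gap concretely rather than by abstract group matching. By Wan--Au, a parabolic entire graph with $Q=c\,\df\zeta^2$, $c\neq 0$, is $S(\Sigma_v)$ with $\Sigma_v$ the hyperbolic cylinder $v=(\tfrac14\tau^{-2}+x^2)^{1/2}$ and $S\in\Iso_\uparrow(\mathbb{L}^3)$; Lemma~\ref{lemma:orbit1} together with Proposition~\ref{prop:transform} reduces matters to $S\in G$; and the explicit computation of the $G$-orbit --- the graphs $v_\theta$ of~\eqref{eqn:invariant} and $\overline v_a$ --- shows that \emph{every} image is invariant under a $1$-parameter group of translations of $\mathbb{L}^3$ (e.g.\ $(x,y,z)\mapsto(x,y+t,z-t\tanh\theta)$ for $\Sigma_{v_\theta}$), and translations do lie in $\Iso_\xi(\mathbb{L}^3)$; conjugating by the $\Iso_\xi$-factor of $S$ keeps them translations. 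Only then does Corollary~\ref{coro:transform} apply, and the classification of invariant surfaces in~\cite{FMP} identifies the duals with the examples $\Sigma_{u_\theta}$ of~\eqref{eqn:invariant}. So the ``double invariance'' you hope for is true, but in the paper it is the \emph{output} of this computation on a concrete representative, not an a priori group-theoretic fact; your proposal remains incomplete until you supply it, either by reproducing these computations or by analyzing the full symmetry group of the cylinder and checking that each of its conjugates under $G$ meets $\Iso_\xi(\mathbb{L}^3)$ in a $1$-parameter subgroup.
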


\begin{remark}
The assumption that the surface is graphical is important in Corollary~\eqref{coro:constant-Q}. For instance, consider the helicoid parameterized by 
\begin{equation}\label{eqn:helicoid}
(x,y)\in\R^2\mapsto (f(x)\cos(y),f(x)\sin(y),a y),
\end{equation}
where $f:\R\to\R$ satisfies the \textsc{ode} $f'(x)=\sqrt{(\tau f(x)^2-a)^2+f(x)^2}$, for some $a>0$. The parametrization~\eqref{eqn:helicoid} is conformal, so the helicoid is parabolic (which contrasts with the hyperbolic limit case $a=0$), and its Abresch--Rosenberg differential is given by $Q=c\,\df\zeta^2$ with respect to a global conformal parameter $\zeta$, see~\cite{Dan2}. 
\end{remark}

\section{Curvature estimates for entire minimal graphs in $\Nil(\tau)$}
In this last section, we will use the duality to give a curvature estimate for entire minimal graphs in $\Nil(\tau)$, with some applications. We will begin by proving a nice formula relating the determinants of the Hessian matrices of dual graphs.

\begin{lemma}\label{lema:hessian}
Let $\Sigma_u\subset\Nil(\tau)$ and $\Sigma_v\subset\mathbb{L}^3$ be dual graphs, and let us denote by $\widetilde K$ the Gauss curvature of $\Sigma_v$. Then
\begin{equation}\label{eqn:hessian}
u_{xy}^2-u_{xx}u_{yy}=\frac{v_{xy}^2-v_{xx}v_{yy}}{(1-v_x^2-v_y^2)^2}+\tau^2=\widetilde{K}+\tau^2.
\end{equation}
Hence the following dual statements hold:
\begin{itemize}
 \item[(a)] If $\Sigma_u\subset\Nil(\tau)$ is an entire minimal graph, then
 \[0\leq u_{xy}^2-u_{xx}u_{yy}\leq \tau^2.\]
 \item[(b)] If $\Sigma_v\subset\mathbb{L}^3$ is an entire spacelike $\tau$-graph, then
\[-\tau^2\leq\widetilde{K}\leq 0.\]
\end{itemize}
\end{lemma}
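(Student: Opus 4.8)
The plan is to derive the central identity~\eqref{eqn:hessian} by computing the pointwise modulus of the two dual holomorphic differentials of Remark~\ref{rmk:differential:nil} in two different ways and matching them through the conformal factor $\omega$. Recall from that remark that $Q=Q_{1,-2\tau i}=\Phi^*\widetilde Q$, where $\widetilde Q=\widetilde Q_{-i,0}$ is the Hopf differential of the dual $\tau$-graph $\Sigma_v\subset\mathbb L^3$, and that by~\eqref{eqn:1ff} the induced metrics are conformal, $\mathrm I=\omega^2\widetilde{\mathrm I}$. The elementary fact I would record first is that, for any $(2,0)$-differential $P$ written in the base coordinates $(x,y)$ and any metric $g$ in the same chart, its squared norm is the conformal invariant $|P|_g^2=-\det(\operatorname{Re}P)/\det(g)$; this is checked in an isothermal chart, where $P=\phi\,\df z^2$ and $\operatorname{Re}P$ is trace-free with determinant $-|\phi|^2$. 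The payoff is that the real parts read off from~\eqref{eqn:Qei:R} satisfy $\operatorname{Re}(Q(e_i,e_j))=\tfrac2\omega\operatorname{Hess}(u)$, so with $\det(\mathrm I)=\omega^2$ the invariant gives at once
\[|Q|_{\mathrm I}^2=-\frac{\det(\tfrac2\omega\operatorname{Hess}u)}{\omega^2}=\frac{4\,(u_{xy}^2-u_{xx}u_{yy})}{\omega^4}.\]

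Next I would carry out the parallel computation from~\eqref{eqn:Qei:L}, using $\det(\widetilde{\mathrm I})=\widetilde\omega^2$ and $\widetilde\omega^2=1-v_x^2-v_y^2$. After the long but mechanical evaluation of $\det(\operatorname{Re}\widetilde Q)$ this should produce
\[|\widetilde Q|_{\widetilde{\mathrm I}}^2=4\Big(\frac{v_{xy}^2-v_{xx}v_{yy}}{(1-v_x^2-v_y^2)^2}+\tau^2\Big).\]
Since $Q=\Phi^*\widetilde Q$ and $\mathrm I=\omega^2\widetilde{\mathrm I}$, the two moduli are related by $|Q|_{\mathrm I}^2=\omega^{-4}|\widetilde Q|_{\widetilde{\mathrm I}}^2$, which is~\eqref{eqn:conformal-factor} read at the level of quadratic differentials. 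Equating the two displayed expressions and cancelling $\omega^{-4}$ yields the first equality in~\eqref{eqn:hessian}, while the second equality is just the classical formula for the Gauss curvature of a spacelike graph in $\mathbb L^3$, which identifies $\tfrac{v_{xy}^2-v_{xx}v_{yy}}{(1-v_x^2-v_y^2)^2}$ with $\widetilde K$.

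For the inequalities I would note that (a) and (b) are equivalent and reduce to a single statement about the dual: since dual graphs share the same base domain, $\Sigma_u$ is entire if and only if $\Sigma_v$ is, and then $\Sigma_v$ is complete by Cheng--Yau~\cite{CY2}; through~\eqref{eqn:hessian} the claim $0\le u_{xy}^2-u_{xx}u_{yy}\le\tau^2$ is literally $-\tau^2\le\widetilde K\le 0$. The lower bound $\widetilde K\ge-\tau^2$ is automatic, because the left-hand side of~\eqref{eqn:hessian} equals $\tfrac14|\widetilde Q|_{\widetilde{\mathrm I}}^2\ge0$; equivalently, writing the principal curvatures of $\Sigma_v$ as $\kappa_1,\kappa_2$ with $\kappa_1+\kappa_2=2\tau$ and $\widetilde K=-\kappa_1\kappa_2$, one has $4(\widetilde K+\tau^2)=(\kappa_1-\kappa_2)^2\ge0$.

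The remaining bound $\widetilde K\le 0$ is the only place where the global hypothesis genuinely enters, and I expect it to be the main obstacle, since pointwise nothing forbids $\Sigma_v$ from being a saddle. Here I would exploit completeness of $\Sigma_v$, either by invoking directly the Cheng--Yau curvature estimate for entire spacelike constant mean curvature graphs in $\mathbb L^3$~\cite{CY2}, or self-containedly through the Bochner-type identity $\Delta\log|\widetilde Q|=2\widetilde K=\tfrac12|\widetilde Q|^2-2\tau^2$ (valid off the zeros of $\widetilde Q$) together with the Omori--Yau maximum principle, which applies because $\widetilde{\mathrm I}$ is complete with Gauss curvature bounded below by $-\tau^2$. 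Evaluating $\Delta\log|\widetilde Q|\le0$ along a sequence realizing $\sup|\widetilde Q|$ forces $|\widetilde Q|^2\le4\tau^2$, i.e.\ $\widetilde K\le0$ (the degenerate case $\widetilde Q\equiv0$ gives $\widetilde K\equiv-\tau^2$, the umbrella). Combining this with the automatic lower bound closes both (a) and (b), and the sharpness is witnessed by the hyperboloid ($\widetilde K=-\tau^2$) and the hyperbolic cylinder ($\widetilde K=0$).
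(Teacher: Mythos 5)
Your proposal is correct and follows essentially the paper's own route: both derive \eqref{eqn:hessian} by equating the moduli of the dual differentials $Q_{1,-2i\tau}=\Phi^*\widetilde Q_{-i,0}$ computed from \eqref{eqn:Qei:R} and \eqref{eqn:Qei:L} after reduction by \eqref{eqn:mean-curvature} (the paper compares $|Q(e_1,e_1)|^2=|\widetilde Q(\widetilde e_1,\widetilde e_1)|^2$ using the twin relation $\tfrac{1+\alpha^2}{\omega^2}=1-\widetilde\alpha^2$, while your invariant $-\det(\operatorname{Re}P)/\det(g)$ is an equivalent repackaging of the same computation), and both obtain (a) and (b) from the Cheng--Yau/Choi--Treibergs estimate $2\tau^2\leq|\widetilde\sigma|^2\leq 4\tau^2$ combined with $|\widetilde\sigma|^2=4\tau^2+2\widetilde K$. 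One caveat on your optional self-contained alternative for $\widetilde K\leq 0$: the Omori--Yau principle requires the function to be bounded above, which $\log|\widetilde Q|$ is not known to be a priori, so that sketch has a gap as stated; since your primary argument invokes~\cite{CY2} directly, exactly as the paper does, this does not affect correctness.
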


\begin{proof}
Let $Q=Q_{1,-2i\tau}$ be the Abresch--Rosenberg differential of $\Sigma_u$, and let $\widetilde Q=\widetilde{Q}_{-i,0}$ be the Hopf differential of $\Sigma_v$, as in Remark~\ref{rmk:differential:nil}. Taking squared moduli in the first equations of~\eqref{eqn:Qei:R} and~\eqref{eqn:Qei:L}, and reducing the resulting expressions by means of~\eqref{eqn:mean-curvature}, we reach the identities
\begin{equation}\label{lema:hessian:eqn1}
\begin{aligned}
 |Q(e_1,e_1)|^2&=4(1+\alpha^2)^2\,\frac{u_{xy}^2-u_{xx}u_{yy}}{\omega^4},\\
 |\widetilde Q(\widetilde e_1,\widetilde e_1)|^2&=4(1-\widetilde\alpha^2)^2\left(\frac{v_{xy}^2-v_{xx}v_{yy}}{\widetilde\omega^4}+\tau^2\right).
\end{aligned}
\end{equation}
Since $Q(e_1,e_1)=\widetilde Q(\widetilde e_1,\widetilde e_1)$ by Theorem~\ref{thm:differential}, and $\frac{1+\alpha^2}{\omega^2}=\frac{\omega^2-\beta^2}{\omega^2}=1-\widetilde\alpha^2$ by the twin relations, we deduce that Equation~\eqref{eqn:hessian} holds true.

If $\Sigma_v$ is an entire spacelike $\tau$-graph in $\mathbb{L}^3$, then its second fundamental form $\widetilde \sigma$ satisfies $2\tau^2\leq|\widetilde \sigma|^2\leq 4\tau^2 $, see~\cite{CY2,CT,Treibergs}. Item (b) follows from the well-known identity $|\widetilde\sigma|^2=4\tau^2+2\widetilde K$, and item (a) is dual to (b) taking into account~\eqref{eqn:hessian}.
\end{proof}

\begin{remark}\label{rmk:q}
Espinar and Rosenberg~\cite{ER2} defined the modulus of the Abresch--Rosenberg differential $Q\,\df\zeta^2$ of an $H$-surface $\Sigma$ in $\mathbb{E}(\kappa,\tau)$, in terms of a complex conformal parameter $\zeta=r+is$, as the function $q=\frac{4|Q|^2}{\mu^4}$, where $\mu$ is the conformal factor such that the metric of $\Sigma$ reads $\mu^2(\df r^2+\df s^2)$. Hence $q$ does not depend upon the choice of the parameter $\zeta$. In the case of a minimal graph $\Sigma_u\subset\Nil(\tau)$, combining~\cite[Lemma~2.2]{ER2}, and Equations~\eqref{eqn:hessian} and~\eqref{eqn:negative-K1}, it is not difficult to get
\begin{equation*}{}
\frac{q}{4\tau^2\nu^4}=u_{xy}^2-u_{xx}u_{yy},
\end{equation*}
which shows that the Hessian determinant is a geometric quantity for an entire minimal graph in $\Nil(\tau)$.
On the other hand, the right-hand-side of~\eqref{eqn:hessian} is nothing but the modulus of the Hopf differential of $\Sigma_v$, so that Equation~\eqref{eqn:hessian} can be understood as the equality between the moduli of the holomorphic differentials.
\end{remark}

\begin{lemma}\label{lemma:K}
Let $\Sigma_u\subset\mathbb{E}(\kappa,\tau)$ be an entire $H$-graph with critical mean curvature. If $K$ and $\nu$ denote the Gauss curvature and the angle function of $\Sigma_u$, then
\begin{equation}\label{eqn:Knu}
-4(H^2+\tau^2)\nu^2\leq K\leq -3(H^2+\tau^2)\nu^4<0.
\end{equation}
\end{lemma}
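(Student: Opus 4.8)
The plan is to feed the conformal duality into the Gauss equation~\eqref{eqn:gauss} so as to read off $K$ from the dual graph, and then to use the curvature estimate already exploited in Lemma~\ref{lema:hessian}. Critical mean curvature means $\kappa=-4H^2$, so~\eqref{eqn:gauss} becomes $K=\det(A)+\tau^2-4(H^2+\tau^2)\nu^2$ and everything reduces to bounding $\det(A)$. The point I would exploit is that on a surface of constant mean curvature the shape operator is completely encoded by the gradient of the angle function through $\nabla\nu=-(AT+\tau JT)$, $T$ being the tangential part of the vertical Killing field; writing $A$ in a frame adapted to $T$ expresses $\det(A)$ in terms of $|\nabla\nu|^2$, $\nu$, and the first order quantity $HT(\nu)+\tau(JT)(\nu)$. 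I would then bring in the equality of moduli of the Abresch--Rosenberg and Hopf differentials from Theorem~\ref{thm:differential}, together with $X^*\df s^2_{\mathbb{E}(\kappa,\tau)}=\omega^2\widetilde X^*\df s^2_{\mathbb{L}(\kappa,H)}$, to rewrite that first order quantity via $\det(\widetilde A)$, the shape operator determinant of the dual graph. The outcome should be the formula~\eqref{eqn:negative-K1},
\[K=\nu^2\big(H^2+\det(\widetilde A)\big)+\frac{|\nabla\nu|^2}{\nu^2}-4(H^2+\tau^2)\nu^2.\]

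Once~\eqref{eqn:negative-K1} is in hand the lower bound is free. Since $\Sigma_u$ is entire, its dual is a complete spacelike $\tau$-graph in the Lorentzian space form $\mathbb{L}(\kappa,H)$, so the Cheng--Yau estimate (the space form analogue of Lemma~\ref{lema:hessian}(b)) gives $0\leq\det(\widetilde A)\leq\tau^2$. Then $\nu^2(H^2+\det(\widetilde A))\geq0$ and $|\nabla\nu|^2/\nu^2\geq0$ force $K\geq-4(H^2+\tau^2)\nu^2$.

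For the upper bound I must estimate $|\nabla\nu|^2$ from above, which I would transport to the dual via the conformal factor as $|\nabla\nu|^2=\nu^6|\nabla\widetilde\nu|^2$ and then control by the dual structure identity in a frame adapted to $\widetilde T$. Writing the traceless part of $\widetilde A$ as $(\widetilde\delta,\widetilde\gamma)$, subject to $\widetilde\delta^2+4\widetilde\gamma^2=4(\tau^2-\det(\widetilde A))$, and applying Cauchy--Schwarz yields $|\nabla\nu|^2\leq\nu^4(1-\nu^2)(\rho+s)^2$, where $\rho=\sqrt{H^2+\tau^2}$ and $s=\sqrt{\tau^2-\det(\widetilde A)}$. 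Substituting in~\eqref{eqn:negative-K1}, the desired $K\leq-3(H^2+\tau^2)\nu^4$ collapses to $2\rho(1-\nu^2)(s-\rho)\leq s^2\nu^2$, which holds since $s\leq\tau\leq\rho$ (again because $\det(\widetilde A)\geq0$). The strict sign $K<0$ then follows from $\nu>0$ on a graph and from $H^2+\tau^2>0$, which is exactly the hypothesis $\kappa-4\tau^2\neq0$.

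The hard part will be establishing~\eqref{eqn:negative-K1}. The delicate bookkeeping is that the Abresch--Rosenberg modulus depends not only on $\det(A)$ and $\nu$ but genuinely on the first order term in $\nabla\nu$, and one must check that the duality makes precisely this term recombine into $\det(\widetilde A)$; the umbrella $u\equiv0$, whose dual is the totally umbilic hyperboloid with $\det(\widetilde A)=\tau^2$, is a convenient sanity check, since it yields $K=-\tau^2\nu^2(2+\nu^2)$, strictly between the two bounds. The second nontrivial step is the estimate for $|\nabla\nu|^2$, which is exactly where completeness of the dual graph and the Cheng--Yau bound are used.
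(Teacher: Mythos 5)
Your skeleton matches the paper's proof more closely than you might expect --- conformal factor $\nu^2$ between the dual graphs, the identity \eqref{eqn:negative-K1}, and transporting $\|\nabla\nu\|^2$ to the dual via $\widetilde\nabla\omega=\widetilde A\widetilde T$ --- but there is one genuine gap, and it sits exactly where you lean on ``the space form analogue'' of Cheng--Yau. You run the duality directly from $\mathbb{E}(-4H^2,\tau)$ into $\mathbb{L}(-4H^2,H)$, which for $H\neq 0$ is the anti-de Sitter space $\mathbb{H}^3_1(-4H^2)$. There the estimate $0\leq\det(\widetilde A)\leq\tau^2$ is not a citable theorem, and its lower half is in fact \emph{false}: for $\tau=0$ it would force every complete spacelike maximal surface in $\mathbb{H}^3_1$ to be totally geodesic, whereas the duals of the non-planar entire critical graphs in $\mathbb{H}^2\times\R$ produced by Fern\'{a}ndez--Mira~\cite{FM} are complete maximal AdS graphs with $\det(\widetilde A)=-s^2<0$ off the umbilics. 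Inspecting your algebra, what you actually need is only $\widetilde K\leq 0$ for the dual, i.e.\ $\det(\widetilde A)\geq -H^2$, i.e.\ $s\leq\rho$ (note $s^2=\rho^2+\widetilde K$ in the AdS Gauss equation $\widetilde K=-H^2-\det(\widetilde A)$); with that weaker bound, both your lower bound, which uses $H^2+\det(\widetilde A)=-\widetilde K\geq 0$, and your final inequality $2\rho(1-\nu^2)(s-\rho)\leq s^2\nu^2$ still close. But $\widetilde K\leq0$ in AdS is precisely what Cheng--Yau does not give you, and even your ``entire $\Rightarrow$ complete'' step is Minkowski-specific~\cite{CY2}. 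The paper's fix is its opening line, which your proposal omits: first apply the Daniel correspondence (which preserves $K$, $\nu$, and $H^2+\tau^2$) to reduce to an entire \emph{minimal} graph in $\Nil(\tau)$, so that the dual is an entire $\tau$-graph in $\mathbb{L}^3$, where $\widetilde K\leq 0$ is Lemma~\ref{lema:hessian}(b), resting on the genuine Cheng--Yau/Choi--Treibergs bound $|\widetilde\sigma|^2\leq 4\tau^2$.

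Two smaller remarks. First, what you flag as ``the hard part'' --- establishing \eqref{eqn:negative-K1} --- is in the paper a two-line computation: the conformal transformation law $\widetilde K\nu^2=K-\Delta\log\nu$ under $\widetilde g=\nu^2 g$, combined with the stability-operator identity $\Delta\nu=2K\nu+4\tau^2\nu^3$; no shape-operator decomposition or comparison of the moduli of the differentials is needed, and as written your proposal never actually derives the formula (your version of it is correct, as the umbrella check confirms, but it is only a plan). Second, on the upside: your endgame is sharper than the paper's. Bounding $\widetilde A$ (plus the $HJ$-term, of operator norm $\rho$) by the eigenvalue bound $s+\rho$ instead of the Hilbert--Schmidt norm $\|\widetilde A\|$ collapses the conclusion to the single inequality $2\rho(1-\nu^2)(s-\rho)\leq s^2\nu^2$, eliminating the paper's case analysis at $\nu=\tfrac{1}{\sqrt 2}$. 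So the proposal is salvageable essentially verbatim once you either prepend the Daniel-correspondence reduction or supply an independent proof that the dual AdS surface satisfies $\widetilde K\leq 0$.
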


\begin{proof}
Since Daniel correspondence~\cite{Dan} preserves $K$ and $\nu$, as well as the value of $H^2+\tau^2$, we will assume that $\Sigma_u$ is an entire minimal graph in $\Nil(\tau)$ with no loss of generality, and prove~\eqref{eqn:Knu} with $H=0$. 

Let $\Sigma_v\subset\mathbb{L}^3$ be the dual entire $\tau$-graph, and let $\Phi:\Sigma_u\to\Sigma_v$ be the conformal diffeomorphism given by~\eqref{eqn:phi}. Since $\Phi$ induces a conformal factor $\nu^2$ between $\Sigma_u$ and $\Sigma_v$, see Equation~\eqref{eqn:conformal-factor}, the Gauss curvature $\widetilde{K}$ of $\Sigma_v$ satisfies $\widetilde{K}\nu^2=K-\Delta\log(\nu)$, where the Laplacian is computed on $\Sigma_u$ (this identity and those hereafter only make sense through the diffeomorphism $\Phi$).

The identity $\Delta\log(\nu)=\frac{1}{\nu}\Delta\nu-\frac{1}{\nu^2}\|\nabla\nu\|^2$, along with the fact that $\nu$ lies in the kernel of the stability operator of $\Sigma_u$ (i.e., $\Delta\nu=2K\nu+4\tau^2\nu^3$), let us deduce that
\begin{equation}\label{eqn:negative-K1}
\widetilde{K}=\frac{-K}{\nu^2}-4\tau^2+\frac{\|\nabla\nu\|^2}{\nu^4}.
\end{equation}
The first inequality in the statement is a consequence of estimating $\widetilde K\leq 0$ and $\|\nabla\nu\|\geq 0$ in~\eqref{eqn:negative-K1} in virtue of Lemma~\ref{lema:hessian}.

As for the second inequality, we will need a bound of $\|\nabla\nu\|^2$ from above. In the sequel, $\|\cdot\|_1$ and $\langle\cdot,\cdot\rangle_1$ will stand for the norm and inner product in $\mathbb{L}^3$, respectively. Also, $\nabla$ and $\widetilde\nabla$ will denote the gradient operators in $\Sigma_u$ and $\Sigma_v$, respectively. Using again that the conformal factor between $\Sigma_u$ and $\Sigma_v$ is $\nu^2$, we obtain 
\begin{equation}\label{eqn:negative-K2}
\frac{1}{\nu^4}\|\nabla\nu\|^2=\frac{1}{\nu^2}\|\widetilde\nabla\nu\|_1^2=\nu^2\|\widetilde\nabla\tfrac{1}{\nu}\|_1^2,
\end{equation}
The angle function of $\Sigma_v$ is given by $\omega=\langle\widetilde{N},\partial_z\rangle_1$, where $\widetilde N$ is the upward-pointing unit normal to $\Sigma_v$, and satisfies $\omega=\frac{1}{\nu}$. Moreover, $\widetilde\nabla\omega=\widetilde{A}\widetilde{T}$, where $\widetilde A$ is the shape operator of $\Sigma_v$, and $\widetilde T=\partial_z+\omega\widetilde N$ is the tangent part to $\Sigma_v$ of the (timelike) unit Killing vector field $\partial_z$ in $\mathbb{L}^3$. Plugging this information in~\eqref{eqn:negative-K2}, we obtain
\begin{equation}\label{eqn:negative-K3}
\frac{1}{\nu^4}\|\nabla\nu\|^2=\nu^2\|\widetilde A\widetilde T\|_1^2\leq\nu^2\|\widetilde A\|^2\|\widetilde T\|_1^2=(4\tau^2+2\widetilde K)(1-\nu^2)
\end{equation}
where we have used the  identity $\|\widetilde A\|^2=4\tau^2+2\widetilde K$, as well as the fact that $\nu^2\|\widetilde T\|^2_1=\nu^2(-1+\omega^2)=1-\nu^2$. Combining~\eqref{eqn:negative-K3} and~\eqref{eqn:negative-K1}, we reach the inequality
\begin{equation}\label{eqn:negative-K4}
K\leq -4\tau^2\nu^4+\widetilde K\nu^2(1-2\nu^2).
\end{equation}
Note that~\eqref{eqn:negative-K4} holds for any (not necessarily entire) minimal graph in $\Nil(\tau)$, and our result will be a consequence of Lemma~\ref{lema:hessian}. Given $p\in\Sigma$, let us distinguish two cases. If $\nu(p)\geq\frac{1}{\sqrt{2}}$, then $1-2\nu(p)^2\leq 0$, so we have $\widetilde K\nu^2(1-2\nu^2)\leq -\tau^2\nu^2(1-2\nu^2)$ at $p$. If, on the contrary, $\nu(p)\leq\frac{1}{\sqrt{2}}$, then $1-2\nu(p)^2\geq 0$, and we get $\widetilde K\nu^2(1-2\nu^2)\leq 0$ at $p$. Hence~\eqref{eqn:negative-K4} yields the estimate
\[K\leq\begin{cases}
-2\tau^2\nu^4-\tau^2\nu^2& \text{if }\nu\geq\frac{1}{\sqrt{2}},\\
-4\tau^2\nu^4& \text{if }\nu\leq\frac{1}{\sqrt{2}}.
\end{cases}\]
In both cases it follows that $K\leq -3\tau^2\nu^4$, so we are done.
\end{proof}

Observe that vertical planes are the only complete flat minimal surfaces in $\Nil(\tau)$, so they have finite total curvature, see~\cite[Theorem~3.1]{ER2}. It is conjectured in~\cite{MPR} that they are the only complete orientable stable minimal surfaces with intrinsic quadratic area growth (i.e., such that the area of intrinsic metric balls grow at most quadratically with respect to their radius). The above estimate allows us to rewrite this conjecture from the point of view of curvature.

\begin{theorem}\label{thm:curvature}
Any entire graph $\Sigma\subset\mathbb{E}(\kappa,\tau)$ with critical mean curvature has negative Gauss curvature. In particular, it has at least quadratic intrinsic area growth, and the following assertions are equivalent:
\begin{enumerate}[label=(\roman*)]
 \item $\Sigma$ has quadratic intrinsic area growth.
 \item $\Sigma$ has finite total curvature.
\end{enumerate}
\end{theorem}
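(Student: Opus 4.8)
The plan is to read off the negative curvature directly from the pointwise estimate of Lemma~\ref{lemma:K}, and then to combine Cartan--Hadamard comparison with the classical results of Hartman and Li on complete surfaces of non-positive curvature. First I would observe that under the critical condition $4H^2+\kappa=0$ one has $\kappa-4\tau^2=-4(H^2+\tau^2)$, so the standing assumption $\kappa-4\tau^2\neq 0$ (which is implicit here, since otherwise $H=\tau=0$ and $\Sigma$ is a Euclidean plane with $K\equiv 0$) is equivalent to $H^2+\tau^2>0$. Because $\Sigma$ is a graph it is nowhere vertical, so its angle function satisfies $\nu>0$ everywhere. Lemma~\ref{lemma:K} then yields $K\leq -3(H^2+\tau^2)\nu^4<0$, which is the first assertion.

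Next I would record the metric properties of $\Sigma$ needed for the growth statements. Since critical mean curvature forces $\kappa=-4H^2\leq 0$, the base $\mathbb{M}^2(\kappa)$ is $\R^2$ or $\mathbb{H}^2(\kappa)$, hence $\Sigma$ is homeomorphic to $\R^2$; and as the induced metric dominates the pullback of the complete base metric, $\Sigma$ is complete. Thus $\Sigma$ is a complete, simply connected surface with $K<0$ (these are intrinsic facts, so they may equally be read on the Daniel correspondent used in Lemma~\ref{lemma:K}). By Cartan--Hadamard the exponential map at any point is a diffeomorphism and geodesic polar coordinates are global, so G\"unther's volume comparison gives $\mathrm{Area}(B_\Sigma(p,r))\geq\pi r^2$ for every $r>0$. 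This is the claimed lower bound, so the area growth is at least quadratic.

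Finally, for the equivalence (i) $\Leftrightarrow$ (ii) I would use that on such a surface the total curvature $\int_\Sigma K\,\df A\in[-\infty,0]$ is well defined as a monotone limit (because $K\leq 0$). The underlying mechanism is Gauss--Bonnet on geodesic disks: writing $L(r)$ for the length of the geodesic circle $\partial B_\Sigma(p,r)$ and $A(r)$ for the area of $B_\Sigma(p,r)$, one has $L'(r)=2\pi-\int_{B_\Sigma(p,r)}K\,\df A$, whence $L'(r)\to 2\pi-\int_\Sigma K\,\df A$ as $r\to\infty$; since $A(r)=\int_0^r L(t)\,\df t$, the area grows quadratically exactly when this limit is finite, that is, when the total curvature is finite, and strictly faster otherwise. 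The rigorous asymptotic statements — global validity of geodesic parallel coordinates and the equivalence of quadratic area growth with finite total curvature for complete non-positively curved surfaces — are precisely the results of Hartman~\cite{Hartman} and Li~\cite{Li}, which apply because we have established $K<0$.

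The main obstacle is this last step: one must carefully match the hypotheses of the Hartman--Li theorems and, when the total curvature is infinite, control $L(r)$ well enough to conclude \emph{strictly} super-quadratic growth rather than merely ruling out the finite-limit case. The pointwise negativity of $K$ is exactly what guarantees that every comparison-geometry hypothesis holds, so the real work is packaging the classical asymptotic analysis rather than proving any new estimate.
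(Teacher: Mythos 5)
Your proposal is correct and follows essentially the same route as the paper: negativity of $K$ comes from Lemma~\ref{lemma:K} (with the same implicit exclusion of the degenerate case $\kappa-4\tau^2=0$, which you rightly make explicit), the at-least-quadratic growth from comparison with the Euclidean plane, and the equivalence (i)$\Leftrightarrow$(ii) from the theorems of Hartman~\cite{Hartman} and Li~\cite{Li}. Your Cartan--Hadamard and Gauss--Bonnet discussion simply spells out the mechanism behind those cited results, so it adds useful detail but not a different argument.
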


\begin{proof}
The fact that $\Sigma$ has at least quadratic area growth follows from~\eqref{eqn:Knu} by comparing the area growth of $\Sigma$ with the (quadratic) area growth of the Euclidean plane. Moreover, Li~\cite{Li} showed that quadratic area growth plus negative curvature implies finite total curvature. The converse holds even if the curvature changes sign, as proved by Hartman~\cite{Hartman}.
 \end{proof}

\begin{remark}
Equation~\eqref{eqn:Knu} does not give much information about the integrability of $K$, since $\nu^2$ is never an integrable function on an entire minimal graph~\cite{ManNel}, and $\nu^4$ can be either integrable or not, as we shall see in the next examples.
\begin{itemize}
 \item The umbrella $\Sigma_u$ given by $u(x,y)=0$ is rotationally invariant, and its Gauss curvature and angle function are given by
 \[K(r)=-\frac{3\tau^2+2\tau^4r^2}{(1+\tau^2r^2)^2},\qquad \nu(r)=\frac{1}{\sqrt{1+\tau^2r^2}},\]
 where $r=(x^2+y^2)^{1/2}$. Hence $K\not\in L^1(\Sigma_u)$ but $\nu^4\in L^1(\Sigma_u)$. There is a functional dependence between $K$ and $\nu$ given by $K=-\tau^2\nu^4-2\tau^2\nu^2$.
 \item The invariant surfaces $\Sigma_{u_\theta}$ given by~\eqref{eqn:invariant} satisfy 
 \[K(x)=\frac{-4\tau^2}{\cosh^2(\theta)(1+4\tau^2x^2)^2},\qquad\nu(x)=\frac{1}{\cosh(\theta)\sqrt{1+4\tau^2x^2}}.\]
 In this case, $K\not\in L^1(\Sigma_{u_\theta})$ and $\nu\not\in L^1(\Sigma_{u_\theta})$. The relation between $K$ and $\nu$ for these surfaces reads $K=-4\tau^2\cosh^2(\theta)\nu^4$.
\end{itemize}
\end{remark}

\begin{remark}\label{rmk:catenoids-helicoids}
It is not true in general that minimal surfaces in $\Nil(\tau)$ have negative Gauss curvature. Note that Gauss equation is given by $K=\det(A)+\tau^2-4\tau^2\nu^2$ in this context, as a particular case of~\eqref{eqn:gauss}. Let us discuss a couple of examples showing that non-graphical surfaces may display different behaviors:
\begin{itemize}
	\item\textbf{Catenoids.} The $1$-parameter family of catenoids in $\Nil(\tau)$ is given in terms of a parameter $E>0$ by the function $u(x,y)=h(r)$, where $r=(x^2+y^2)^{1/2}$ and $h:[E,\infty)\to\R$ is the solution to the \textsc{ode}
 \[h'(r)=\frac{E\sqrt{1+\tau^2r^2}}{\sqrt{r^2-E^2}},\qquad h(E)=0.\]
 The Gauss curvature and the angle function can be computed as
 \[K(r)=-\frac{E^2+3\tau^2r^4+2\tau^4r^6}{r^4(1+\tau^2r^2)^2}<0,\quad \nu(r)=\frac{\sqrt{r^2-E^2}}{r\sqrt{1+\tau^2r^2}}\]
 It follows that $K$ is not integrable for any $E>0$.
 
 \item\textbf{Helicoids.} The 1-parameter family of complete minimal helicoids parameterized by $(x,y)\in\R^2\mapsto(x\cos(y),x\sin(y),ay)$ (for some $a>0$) has constant Gauss curvature $\frac{2a\tau-1}{a^2}$ along the $z$-axis, so its sign depends on $a$.
\end{itemize}
\end{remark}

As a conclusion, we will prove a simple necessary condition in order to have finite total curvature. Umbrellas are examples showing that it is not sufficient.

\begin{proposition}\label{prop:tendToZero}
If $\Sigma\subset\mathbb{E}(\kappa,\tau)$ is a complete stable $H$-surface with finite total curvature, then both the Gauss curvature and the angle function tend to zero on any divergent sequence of points of $\Sigma$.
\end{proposition}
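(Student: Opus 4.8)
The plan is to argue by contradiction by means of a compactness argument at infinity that exploits the homogeneity of $\mathbb{E}(\kappa,\tau)$. Suppose some divergent sequence $\{p_n\}\subset\Sigma$ violated the conclusion; passing to a subsequence we may assume $|K(p_n)|\geq\delta$ or $|\nu(p_n)|\geq\delta$ for some $\delta>0$. The first observation is that finite total curvature prevents concentration of curvature at infinity: given $\varepsilon>0$ there is a compact set $C\subset\Sigma$ with $\int_{\Sigma\setminus C}|K|\,dA<\varepsilon$, and since $p_n$ diverges the intrinsic balls $B_R(p_n)$ are eventually disjoint from $C$; hence $\int_{B_R(p_n)}|K|\,dA\to0$ for every fixed radius $R>0$.

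Next I would upgrade stability to uniform control of the geometry. As $\mathbb{E}(\kappa,\tau)$ is homogeneous it has bounded geometry, so the curvature estimates of Rosenberg, Souam and Toubiana for stable $H$-surfaces yield $|A|(p)\,\min\{r_0,\mathrm{dist}(p,\partial\Sigma)\}\leq C_0$ for constants $r_0,C_0$ depending only on $\kappa,\tau,H$. Since $\Sigma$ is complete, $\partial\Sigma=\emptyset$ and thus $|A|\leq C_0/r_0$ everywhere. Translating each $p_n$ to a fixed base point $p_0$ by an ambient isometry $T_n$, the surfaces $\Sigma_n=T_n(\Sigma)$ are complete stable $H$-surfaces through $p_0$ with a uniform bound on $|A|$; by the elliptic theory of the constant mean curvature equation they admit uniform local graphical charts with $C^\infty$ estimates, so after passing to a subsequence $\Sigma_n\to\Sigma_\infty$ in $C^\infty_{\mathrm{loc}}$, with $\Sigma_\infty$ again a complete stable $H$-surface through $p_0$.

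The no-concentration estimate now passes to the limit: for each $R>0$ one has $\int_{B_R(p_0)\cap\Sigma_\infty}|K_\infty|\,dA=\lim_n\int_{B_R(p_n)}|K|\,dA=0$, so $\Sigma_\infty$ is flat. Since the convergence is $C^\infty_{\mathrm{loc}}$, $K(p_n)\to K_\infty(p_0)=0$ and $\nu(p_n)\to\nu_\infty(p_0)$; the former already contradicts $|K(p_n)|\geq\delta$ and establishes $K\to0$. For the angle function I would invoke that a complete flat stable $H$-surface in $\mathbb{E}(\kappa,\tau)$ is a vertical cylinder, so $\nu_\infty\equiv0$; in the minimal case this is precisely the fact recalled before Theorem~\ref{thm:curvature} that vertical planes are the only complete flat minimal surfaces in $\Nil(\tau)$. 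Under the critical mean curvature hypothesis $\kappa+4H^2=0$ with $H^2+\tau^2>0$ that governs this section, one may instead read off $\nu_\infty(p_0)=0$ from the inequality $3(H^2+\tau^2)\nu^4\leq|K|$ of Lemma~\ref{lemma:K}, avoiding any classification; either way $\nu(p_n)\to0$.

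The main obstacle is the compactness step: one must ensure that the limit genuinely inherits completeness, stability and the $H$-equation, and that the total curvature is continuous along the convergence. The uniform second fundamental form bound coming from stability is the crucial input that keeps the limit nondegenerate and must be invoked with the correct dependence on the ambient bounded geometry; absent stability, or in the space-form case $\kappa-4\tau^2=0$ where flat limits such as horizontal planes in $\mathbb{R}^3$ carry a nonzero constant angle function, the angle-function conclusion can genuinely fail, which is why the critical, non-space-form regime is the natural setting.
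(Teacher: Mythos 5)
Your argument is correct and essentially the paper's own proof: you translate the divergent points to a fixed base point, use the Rosenberg--Souam--Toubiana estimates and convergence for stable $H$-surfaces in a bounded-geometry ambient space to extract a complete limit $\Sigma_\infty$, force it to be flat using finite total curvature, and conclude $\nu\to 0$ from the classification of complete flat $H$-surfaces as vertical cylinders (\cite[Theorem~3.1]{ER2}), exactly as in the paper. The only differences are cosmetic: the paper runs the curvature step by contradiction (pulling a nonzero curvature integral on the limit back to disjoint sets $U_n$ escaping to infinity) rather than through your equivalent tail estimate $\int_{B_R(p_n)}|K|\to 0$, and your suggested alternative via Lemma~\ref{lemma:K} would require $\Sigma_\infty$ (or $\Sigma$) to be an entire graph with critical mean curvature, so the cylinder classification remains the essential ingredient for the angle function in the generality of the statement.
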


\begin{proof}
Let $\{p_n\}\subset\Sigma$ be a divergent sequence in $\Sigma$. Given $n\in\mathbb{N}$, consider $\Sigma_n$ to be a translation of $\Sigma$ taking $p_n$ to the origin. Since the geometry of $\mathbb{E}(\kappa,\tau)$ is bounded and each $\Sigma_n$ is stable, standard convergence arguments (see~\cite{RST}) implies that $\{\Sigma_n\}$ subconverges to a complete $H$-surface $\Sigma_\infty$, which is either a complete multigraph or a vertical cylinder over a curve of geodesic curvature $\pm 2H$.

If $K(p_n)$ does not converge to zero, the subsequence may be chosen so that $\Sigma_\infty$ has Gauss curvature different from zero at the origin, so there is a relatively compact neighborhood $U_\infty\subset\Sigma_\infty$ such that $\int_{U_\infty}K_\infty\neq 0$. Since the convergence is in the $C^k$-topology on compact subsets for any $k\geq 0$, we deduce that there exists a sequence of relatively compact open subsets $U_n\subset\Sigma$ such that $p_n\in U_n$ and $\int_{U_n}K$ becomes arbitrarily close to $\int_{U_\infty}K_\infty\neq 0$. This clearly implies that $\Sigma$ does not have finite total curvature, contradicting the statement.

Now assume that there is a sequence $\{p_n\}\subset\Sigma$ such that $\nu(p_n)$ converges to some value $a$. Applying the same reasoning as above, we conclude that there a subsequence converging to a complete surface $\Sigma_\infty$, but item (a) implies that $\Sigma_\infty$ is a complete flat $H$-surface, so $\Sigma_\infty$ is a vertical cylinder by~\cite[Theorem 3.1]{ER2}. Hence $a=0$ since the angle function of a vertical cylinder vanishes identically.
\end{proof}


\begin{thebibliography}{OO}

\bibitem{AR}
U. Abresch, H. Rosenberg.
A Hopf differential for constant mean curvature surfaces in $\mathbf{S}^2\times\mathbf{R}$ and $\mathbf{H}^2\times\mathbf{R}$.
\emph{Acta Math.} \textbf{193} (2004), 141--174.

\bibitem{AR2}
U. Abresch, H. Rosenberg.
Generalized Hopf differentials.
\emph{Mat. Contemp.} \textbf{28} (2005), 1--28.

\bibitem{AL}
H. Ara\'{u}jo, M. L. Leite.
\newblock How many maximal surfaces do correspond to one minimal surface?
\newblock \emph{Math. Proc. Camb. Phil. Soc.} \textbf{146} (2009), no. 1, 165--175.

\bibitem{Calabi}
E. Calabi.
\newblock Examples of Bernstein problems for some non-linear equations, 
\newblock \emph{Proc. Sympos. Pure Math.} \textbf{15} (1970), Amer. Math. Soc., Providence, RI, 223--230.

\bibitem{Cartier} S. Cartier.
\newblock Saddle towers in Heisenberg space.
\newblock \emph{Asian J. Math.} (to appear)

\bibitem{CY2} S. Y. Cheng, S. T. Yau.
\newblock Maximal space-like hypersurfaces in the Lorentz-Minkowski spaces.
\newblock \emph{Ann. of Math. (2)}, \textbf{104} (1976), no. 3, 407--419.

\bibitem{CT}  H. I. Choi, A. Treibergs.
\newblock Gauss maps of spacelike constant mean curvature hypersurfaces of Minkowski space.
\newblock \emph{J. Differential. Geom.}, \textbf{32} (1990), 775--817.

\bibitem{Dan} B. Daniel.
\newblock Isometric immersions into 3-dimensional homogeneous manifolds.
\newblock \emph{Comment. Math. Helv.}, \textbf{82} (2007), no. 1, 87--131.

\bibitem{Dan2} B. Daniel.
\newblock The Gauss map of minimal surfaces in the Heisenberg group.
\newblock \emph{Int. Math. Res. Not.}, \textbf{2011} (2011), no. 3, 674--695.

\bibitem{DIK}
J. Dorfmeister, J. Inoguchi, A. Kobayashi.
\newblock A solution to the Bernstein problem in the three-dimensional Heisenberg group via loop groups.
\newblock \emph{Can. Math. J.} \textbf{59} (2016), no. 1, 50--61.

\bibitem{DFM}
B. Daniel, I. Fern\'{a}ndez, P. Mira.
\newblock The Gauss map of surfaces in $\widetilde{\mathrm{PSL}}_2(\mathbb{R})$.
\newblock \emph{Calc. Var.} \textbf{52} (2015), 507--528.

\bibitem{DHM} B. Daniel, L. Hauswirth, P. Mira.
\newblock Constant mean curvature surfaces in homogeneous manifolds
\newblock Notes of the 4th KIAS Workshop on Differential Geometry, Seoul, 2009.

\bibitem{ER2} J. M. Espinar, H. Rosenberg.
\newblock Complete constant mean curvature surfaces in homogeneous spaces.
\newblock \emph {Comment. Math. Helv.}, \textbf{86} (2011), no. 3, 659--674.

\bibitem{FL} I. Fern\'{a}ndez, F. J. L\'{o}pez.
\newblock Periodic maximal surfaces in the Lorentz--Minkowski space $\mathbb{L}^3$.
\newblock \emph{Math. Z.}, \textbf{256} (2007), 573--601.

\bibitem{FM3}
I. Fern\'{a}ndez, P. Mira.
\newblock Harmonic maps and constant mean curvature surfaces in $\mathbb{H}^2\times\mathbb{R}$.
\newblock \emph{Amer. J. Math.} \textbf{129} (2007), 1145--1181.

\bibitem{FM} I. Fern\'{a}ndez, P. Mira.
\newblock Holomorphic quadratic differentials and the Bernstein problem in Heisenberg space.
\newblock \emph{Trans. Amer. Math. Soc.}, \textbf{361} (2011), no. 11, 5737--5752.

\bibitem{FM2} I. Fern\'{a}ndez, P. Mira.
\newblock Constant mean curvature surfaces in 3-dimensional Thurston geometries.
\newblock Proceedings of the ICM 2010, 830--861. Hindustan Book Agency, India, 2012.

\bibitem{FMP} C. Figueroa, F. Mercuri, R. H. L. Pedrosa.
\newblock Invariant surfaces of the Heisenberg groups.
\newblock Ann. Mat. Pura Appl. (4), \textbf{177} (1999), 173--194. 

\bibitem{HN} J. Hano, K. Nomizu.
\newblock Surfaces of revolution with constant mean curvature in Lorentz-Minkowski space.
\newblock \emph{Tohoku Math. J.}, \textbf{36} (1984), 427--437. 

\bibitem{Hartman} P. Hartman.
\newblock Parallel coordinates in the large.
\newblock \emph{Amer. J. Math.}, \textbf{86}, no. 4, 705--727.

\bibitem{Lee} H. Lee.
\newblock Extensions of the duality between minimal surfaces and maximal surfaces.
\newblock\emph{Geom. Dedicata}, \textbf{151} (2011), 373--386.

\bibitem{LeeMan} H. Lee, J. M. Manzano.
\newblock Generalized Calabi's correspondence and complete spacelike surfaces.
\newblock \emph{Asian J. Math.}, to appear.

\bibitem{Li} P. Li.
\newblock Complete surfaces of at most quadratic area growth.
\newblock\emph{Comment. Math. Helv.}, \textbf{72} (1997), no. 1, 67--71.

\bibitem{Lopez} R. L\'{o}pez. 
\newblock Differential Geometry of curves and surfaces in Lorentz-Minkowski space.
\newblock \emph{Int. Electron. J. Geom.}, \textbf{7} (2014), no. 1, 44--107. 

\bibitem{Man} J. M. Manzano.
\newblock On the classification of Killing submersions and their isometries.
\newblock \emph{Pac. J. Math.}, \textbf{270} (2014), no. 2, 367--692.

\bibitem{Man17} J. M. Manzano.
\newblock On the conformal duality between constant mean curvature surfaces in $\mathbb{E}(\kappa,\tau)$ and $\mathbb{L}(\kappa,\tau)$.
\newblock To appear in Proceedings of the \emph{Young Researcher Workshop on Differential Geometry in Minkowski Space}, Granada, 2017. Available at arXiv:1707.07504.

\bibitem{MPR} J. M. Manzano. J. P\'{e}rez, M. Rodr\'{i}guez.
\newblock Parabolic stable surfaces with constant mean curvature.
\newblock\emph{Calc. Var. Partial Differential Equations}, \textbf{42} (2011), no. 1--2, 137--152.

\bibitem{ManNel} J. M. Manzano. B. Nelli.
\newblock Height and area estimates for constant mean curvature graphs in $\mathbb{E}(\kappa,\tau)$-spaces.
\newblock To appear in \emph{J. Geom. Anal.} Available at arXiv:1504.05239.

\bibitem{MMPR}
W. H. Meeks, P. Mira, J. P\'{e}rez, A. Ros.
Constant mean curvature spheres in homogeneous three-manifolds.
Preprint available at arXiv:1706.09394.

\bibitem{NST} B. Nelli, R. Sa Earp, E. Toubiana.
\newblock Minimal Graphs in  $\Nil$: existence and non-existence results.
\newblock Preprint available at arXiv:1508.01724.

\bibitem{Oneill} B. O'Neill.
\newblock Semi-Riemannian geometry. With applications to relativity. 
\newblock \emph{Pure and Applied Mathematics}, \textbf{103}. Academic Press, Inc., New York, 1983. ISBN: 0-12-526740-1.

\bibitem{RST} H. Rosenberg, R. Souam, E. Toubiana.
\newblock General curvature estimates for stable $H$-surfaces in 3-manifolds and applications.
\newblock \emph{J. Differential Geom.}, \textbf{84} (2010), no. 3, 623--648.

\bibitem{Treibergs} A. Treibergs.
\newblock Entire Spacelike Hypersurfaces on Constant Mean Curvature in Minkowski Space.
\newblock \emph{Invent. Math.}, \textbf{66} (1982), no. 1, 39--56.

\bibitem{Wan}
T. Y. Wan.
\newblock Constant mean curvature surface harmonic map and universal Teichmuller space. 
\newblock \emph{J. Differential Geom.} \textbf{35} (1992), 643--657.

\bibitem{WanAu}
T. Y. Wan, T. K. Au.
\newblock Parabolic constant mean curvature spacelike surfaces, 
\newblock \emph{Proc. Amer. Math. Soc.} \textbf{120} (1994), 559--564.
\end{thebibliography}
\end{document}